\newcommand{\tori}{T^2_{\theta}}
\newcommand{\C}{\mathbb{C}}
\newcommand{\Z}{\mathbb{Z}}
\newcommand{\R}{\mathbb{R}}
\newcommand{\ide}{\mathfrak{I}}
\newcommand{\e}{\varepsilon}
\newcommand{\ct}{C^\infty(T)}
\newcommand{\ore}{\mathfrak{A}}
\newcommand{\toep}{\mathcal{T}^\infty}
\newcommand{\comp}{\mathbb{K}}
\DeclareMathOperator{\coker}{coker}
\newtheorem{definition}{Definition}[section]
\newtheorem{proposition}[definition]{Proposition}
\newtheorem{lemma}[definition]{Lemma}
\newtheorem{theorem}[definition]{Theorem}
\newtheorem{ex}[definition]{Example}
\newtheorem{corollary}[definition]{Corollary}
\title{The Entire Cyclic Cohomology of Noncommutative 3-spheres}
{\rm
\author{{\rm{Katsutoshi Kawashima}} \and {\rm{Hiroshi Takai}}}
}
\begin{document}

\maketitle

\begin{center}
{\sc
Department of Mathematics and Information Sciences, \\
Tokyo Metropolitan University
}
\end{center}

\begin{abstract}
In this paper, we compute the entire cyclic cohomology of noncommutative 3-spheres. First of all, we verify the Mayer-Vietoris exact sequence of entire cyclic cohomology in the framework of Fr\'echet $^*$-algebras. Applying it to their noncommutative Heegaard decomposition, we deduce that their entire cyclic cohomology is isomorphic to the d'Rham homology of the ordinary 3-sphere with the complex coefficients.
\end{abstract}

\section{Introduction}

Since Connes \cite{connes} constructed a generalization of periodic cyclic cohomology which is called entire cyclic cohomology, its explicit computation is executed only for few examples (cf. \cite{brodzki-plymen,connes,mathai-stevenson}). As a matter of fact, their entire cyclic cohomologies are nothing but their periodic ones. Recently, the first named author \cite{naito} computed that of smooth noncommutative 2-tori, which have the same property cited above.

In this paper, we firstly formulate the Mayer-Vietoris exact sequence for entire cyclic cohomology, then we apply it to compute for smooth noncommutative 3-spheres. The key idea is based on Meyer's excision \cite{meyer,meyer2} concerning the short exact sequences of Fr\'echet $^*$-algebras to obtain a noncommutative Mayer-Vietoris exact sequence for entire cyclic cohomology. To use his excision, we need to construct a bounded linear section for a short exact sequence of Fr\'echet $^*$-algebras. To ensure it, we reformulate the notion of metric approximation property in the framework of Fr\'echet $^*$-algebras to solve the lifting problem (see \cite{choi}). We then use Baum, Hajac, Matthes and Szyma\'nskis' method \cite{bhms} for a Heegaard decomposition of smooth noncommutative 3-spheres since they pointed out an insufficient part of Matsumoto's costruction \cite{matsumoto} in the case of $C^*$-algebras. 

Under this circumstance, we conclude that the entire cyclic cohomology of noncommutative 3-spheres is the same as their periodic one.

Throughout this paper, $\theta$ is an irrational number in the open unit interval $(0, 1)$ and we use the notation $\Z_{\geq 0}$ for the set of all nonnegative integers.

\newpage


\section{Preliminaries}

We prepare some notations and basic properties used throughout the paper. Let $\mathfrak{A}$ be a Fr\'echet $^*$-algebra or $F^*$-algebra and denote by $C^\infty([0, 1], \mathfrak{A})$ the set of all $\mathfrak{A}$-valued smooth functions on the closed unit interval $[0, 1]$ with respect to Fr\'echet topology. Given an element $f\in C^\infty([0, 1], \mathfrak{A})$ and an integer $n\geq 1$, we write by $f^{(n)}$ its $n$-th derivative of $f$ at $t \, (0<t<1)$ and denote by $f^{(n)}_+(0), f^{(n)}_-(1)$ the $n$-th derivatives at $0$ or $1$ as follows:
\begin{align*}
f_+^{(n)}(0)&=\lim_{t\to 0+}f^{(n)}(t) \\
f_-^{(n)}(1)&=\lim_{t\to 1-0}f^{(n)}(t).
\end{align*}
For $n=0$, we write $f^{(0)}_+(0)=f(0), f^{(0)}_-(1)=f(1)$.

\begin{definition}
For a $F^*$-algebra $\ore$, we define the suspension $S^\infty\ore$ of $\ore$ by
\[
S^\infty\mathfrak{A}=\{f\in C^\infty([0, 1], \mathfrak{A}) \, | \, f_+^{(n)}(0)=f_-^{(n)}(1)=0 \quad (n\geq 0) \}. 
\]
and we also define the cone $C^\infty\ore$ of $\ore$ by
\[
C^\infty\mathfrak{A}=\{f\in C^\infty([0, 1], \mathfrak{A}) \, | \, f_-^{(n)}(1)=0 \quad (n\geq 0) \}.
\]
\end{definition}

Then we have the following short exact sequence:
\[
\begin{CD}
0 @>>> \mathfrak{I} @>i>> C^\infty\mathfrak{A} @>q>> \mathfrak{A} @>>> 0,
\end{CD}
\]
where $q$ is defined by $q(f)=f(0)$, 
\[ \mathfrak{I}=\{ f\in C^\infty\mathfrak{A} \, | \, f(0)=0 \} \]
and $i$ is the canonical inclusion. The map $s : \ore \to C^\infty\ore$ defined by 
\[ s(a)(t)=(1-t)a \quad (a\in \mathfrak{A}, t\in [0, 1]) \]
is a bounded linear section of $q$ with respect to Fr\'echet topology. We need to know the entire cyclic cohomologies of $C^\infty\mathfrak{A}$ and $\mathfrak{I}$. We say that given two $F^*$-algebras $\mathfrak{A}$ and $\mathfrak{B}$, the map
\[ \Phi : \mathfrak{A} \to C^\infty ([0, 1], \mathfrak{B}) \]
is called a smooth homotopy if it is a bounded homomorphism with respect to Fr\'echet topology and two bounded homomorphisms $f, g : \mathfrak{A} \to \mathfrak{B}$ are smoothly homotopic if there exists a smooth homotopy $\Phi$ from $\mathfrak{A}$ to $\mathfrak{B}$ with $\Phi_0=f, \Phi_1=g$. A Fr\'echet algebra $\mathfrak{A}$ is smoothly homotopic to another one $\mathfrak{B}$ if there are two homomorphisms $f : \mathfrak{A} \to \mathfrak{B}$ and $g : \mathfrak{B} \to \mathfrak{A}$ such that $g\circ f$ (resp. $f\circ g$) is smoothly homotopic to the identity on $\mathfrak{A}$ (resp. $\mathfrak{B}$). According to Meyer \cite{meyer}, we know the homotopy invariance of entire cyclic cohomology in the framework of $F^*$-algebras:

\begin{proposition}[\cite{meyer}]\label{invariance}
If two bounded homomorphisms are smoothly homotopic, then they induce the same map on the entire cyclic cohmology.
\end{proposition}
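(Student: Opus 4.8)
The plan is to reduce the statement to the two endpoint evaluation maps on $C^\infty([0,1],\mathfrak B)$ and then to exhibit an explicit cochain homotopy between them, imitating the classical homotopy argument for periodic cyclic cohomology while keeping track of the entire growth estimates. Suppose $f,g\colon\ore\to\mathfrak B$ are smoothly homotopic, say through a bounded homomorphism $\Phi\colon\ore\to C^\infty([0,1],\mathfrak B)$ with $\mathrm{ev}_0\circ\Phi=f$ and $\mathrm{ev}_1\circ\Phi=g$, where $\mathrm{ev}_t\colon C^\infty([0,1],\mathfrak B)\to\mathfrak B$ denotes evaluation at $t$. By functoriality of the induced maps on entire cyclic cohomology one has $f^{*}=\Phi^{*}\circ\mathrm{ev}_0^{*}$ and $g^{*}=\Phi^{*}\circ\mathrm{ev}_1^{*}$, so it is enough to show that $\mathrm{ev}_0$ and $\mathrm{ev}_1$ induce the same map, i.e. $\mathrm{ev}_0^{*}=\mathrm{ev}_1^{*}$.

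To do this I would work with the entire $(b,B)$-complex and its total differential $\partial=b+B$. The operator $\delta=\tfrac{d}{dt}$ is a bounded derivation of $C^\infty([0,1],\mathfrak B)$, so it induces a Lie-derivative operator $L_\delta$ on the entire cyclic chains of $C^\infty([0,1],\mathfrak B)$ together with a Cartan contraction $\iota_\delta$ of adjacent degree, satisfying the homotopy formula $L_\delta=\partial\,\iota_\delta+\iota_\delta\,\partial$ (the cyclic analogue of Cartan's magic formula, in the spirit of Goodwillie and Getzler). Each $\mathrm{ev}_t$ is an algebra homomorphism, hence the induced chain map $\mathrm{ev}_{t*}$ commutes with $b$, $B$ and $\partial$, and for a chain $\omega$ the Leibniz rule gives directly $\tfrac{d}{dt}\bigl(\mathrm{ev}_{t*}\omega\bigr)=\mathrm{ev}_{t*}(L_\delta\omega)$. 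Integrating over $[0,1]$ and using the fundamental theorem of calculus for the Fr\'echet-space-valued function $t\mapsto\mathrm{ev}_{t*}\omega$ yields
\[
\mathrm{ev}_{1*}-\mathrm{ev}_{0*}=\int_0^1\mathrm{ev}_{t*}\,L_\delta\,dt=\partial\, h+h\,\partial,\qquad\text{where}\qquad h:=\int_0^1\mathrm{ev}_{t*}\circ\iota_\delta\,dt .
\]
Dualizing, $h$ provides a cochain homotopy between $\mathrm{ev}_0^{\#}$ and $\mathrm{ev}_1^{\#}$ on entire cyclic cochains, giving $\mathrm{ev}_0^{*}=\mathrm{ev}_1^{*}$ and completing the argument. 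One could equally avoid the Lie-derivative language and write $h$ down by hand as the usual Connes cyclic homotopy, a finite sum of cochains obtained by inserting the velocity term and integrating the parameter over standard simplices; the formal identity $\mathrm{ev}_{1*}-\mathrm{ev}_{0*}=\partial h+h\partial$ is then exactly the one from the periodic theory.

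The main obstacle — and essentially the only point where the word \emph{entire} matters — is to verify that all of this stays within the entire cyclic complex, that is, that $\iota_\delta$ and $h$ respect the entire growth condition on cochains. Concretely one must check that $\iota_\delta$ distorts the defining seminorm estimates only by controllable (at worst geometric) factors, that the family $t\mapsto\mathrm{ev}_{t*}$ is uniformly bounded and smooth in $t$ so that the Fr\'echet-valued integral $\int_0^1\mathrm{ev}_{t*}\iota_\delta\,dt$ converges and again satisfies an entire estimate, and that the identification of entire chains over $C^\infty([0,1],\mathfrak B)$ with smooth families of entire chains over $\mathfrak B$ together with a $dt$-component is compatible with these bounds. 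Since $[0,1]$ is compact and everything in sight is smooth, the factorial bounds defining entireness are preserved; carrying this out carefully in the $F^*$-algebra framework, along the lines of Meyer's estimates \cite{meyer}, is the technical heart of the proof.
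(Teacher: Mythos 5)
The paper offers no proof of this proposition at all: it is quoted verbatim from Meyer's thesis \cite{meyer}, so there is nothing in the text to compare your argument against line by line. That said, your outline is the standard — and essentially correct — strategy, and it is in substance the argument found in the cited sources (Connes' original proof of homotopy invariance for entire cocycles and Meyer's smooth-homotopy-invariance theorem): factor $f=\mathrm{ev}_0\circ\Phi$, $g=\mathrm{ev}_1\circ\Phi$ through the bounded homomorphism $\Phi$, reduce to $\mathrm{ev}_0^*=\mathrm{ev}_1^*$, and produce a cochain homotopy by integrating a Cartan-type contraction $\iota_\delta$ for the derivation $\delta=d/dt$ against the family $t\mapsto\mathrm{ev}_{t*}$.

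The one place where your write-up falls short of a proof is exactly the place you flag yourself: the entire growth condition. The assertion that ``since $[0,1]$ is compact and everything in sight is smooth, the factorial bounds defining entireness are preserved'' is not an argument — compactness and smoothness give uniform bounds in $t$, but they do not by themselves control how $\iota_\delta$ (which inserts the velocity $\Phi'_t(a)$ into cochains of every degree $n$) distorts the radius-of-convergence condition $\|\varphi_{2n}\|\le C\,r^n/n!$ uniformly in $n$. One must write down the homotopy cochain degree by degree and verify that the combinatorial factors coming from the insertions and the simplex integrations are beaten by the $1/n!$; this is precisely the content of Connes' and Meyer's estimates, and without it the ``proof'' reduces to citing the result being proved. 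Also note that in the $(b,B)$-bicomplex the Cartan homotopy operator has two components of opposite degree (the Rinehart/Goodwillie formula), not a single $\iota_\delta$; this does not affect the strategy but should be stated correctly if the homotopy is to be written out. Since the paper itself simply defers to \cite{meyer}, your sketch is an acceptable substitute for the citation only if those estimates are actually supplied.
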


We also mention the following lemma:

\begin{lemma}\label{iso_for_suspension}
Let $S^\infty\mathfrak{A}, C^\infty\mathfrak{A}$ and $\mathfrak{I}$ be cited above, we then have that
\[ HE^*(C^\infty\mathfrak{A})=0, \quad HE^*(\mathfrak{I})\simeq HE^*(S^\infty\mathfrak{A}). \]
\end{lemma}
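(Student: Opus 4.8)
The plan is to prove both assertions by producing explicit smooth homotopies and invoking Proposition \ref{invariance}.

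\emph{Contractibility of the cone.} First I would show that $C^\infty\mathfrak{A}$ is smoothly contractible. Consider the family $\Psi_s \colon C^\infty\mathfrak{A} \to C^\infty\mathfrak{A}$, $s \in [0,1]$, defined by
\[
(\Psi_s f)(t) = f\bigl(s + (1-s)t\bigr) \qquad (f \in C^\infty\mathfrak{A},\ t \in [0,1]).
\]
Since $(s,t) \mapsto s + (1-s)t$ is a smooth map of $[0,1]^2$ into $[0,1]$ which is $t \mapsto t$ at $s=0$ and the constant $1$ at $s=1$, the assignment $f \mapsto \bigl((s,t) \mapsto (\Psi_s f)(t)\bigr)$ is a bounded homomorphism $C^\infty\mathfrak{A} \to C^\infty([0,1], C^\infty\mathfrak{A})$; by the chain rule $\Psi_s f$ still vanishes to all orders at $t=1$, because $s + (1-s)\cdot 1 = 1$ and $f$ is flat there, so $\Psi_s f \in C^\infty\mathfrak{A}$. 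As $\Psi_0 = \mathrm{id}$ and $\Psi_1 = 0$, the identity of $C^\infty\mathfrak{A}$ is smoothly homotopic to the zero homomorphism, hence induces the zero map on $HE^*$ by Proposition \ref{invariance}; therefore $HE^*(C^\infty\mathfrak{A}) = 0$.

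\emph{Homotopy equivalence $S^\infty\mathfrak{A} \simeq \mathfrak{I}$.} Fix a smooth nondecreasing $\phi \colon [0,1] \to [0,1]$ with $\phi \equiv 0$ near $0$ and $\phi \equiv 1$ near $1$, and set $\phi_s(t) = (1-s)t + s\,\phi(t)$ for $s \in [0,1]$, so each $\phi_s$ maps $[0,1]$ into $[0,1]$ with $\phi_s(0)=0$, $\phi_s(1)=1$, $\phi_0 = \mathrm{id}$, $\phi_1 = \phi$. Let $\iota \colon S^\infty\mathfrak{A} \hookrightarrow \mathfrak{I}$ be the inclusion and define $r \colon \mathfrak{I} \to S^\infty\mathfrak{A}$ by $(rf)(t) = f(\phi(t))$; since $\phi$ is locally constant near both endpoints while $f(0)=0$ and $f$ is flat at $1$, the function $rf$ is flat at $0$ and at $1$, so $r$ is a well-defined bounded homomorphism. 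Now put $(\Phi_s f)(t) = f(\phi_s(t))$. The chain rule shows that $\Phi_s$ preserves the endpoint conditions defining $\mathfrak{I}$ (respectively $S^\infty\mathfrak{A}$): for $f \in \mathfrak{I}$ one has $(\Phi_s f)(0) = f(0) = 0$, and all $t$-derivatives of $\Phi_s f$ at $t=1$ are polynomial combinations of the vanishing derivatives of $f$ at $1$; for $f \in S^\infty\mathfrak{A}$ the same argument applies at both endpoints. Thus $\Phi$ is at once a smooth homotopy on $\mathfrak{I}$ from $\mathrm{id}_{\mathfrak{I}}$ to $\iota\circ r$ and a smooth homotopy on $S^\infty\mathfrak{A}$ from $\mathrm{id}_{S^\infty\mathfrak{A}}$ to $r\circ\iota$. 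Hence $\iota$ and $r$ are mutually inverse smooth homotopy equivalences, and Proposition \ref{invariance} yields $HE^*(\mathfrak{I}) \simeq HE^*(S^\infty\mathfrak{A})$.

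\emph{Main obstacle.} The routine but essential technical points are that pre-composition with a smooth reparametrization of $[0,1]$ is a bounded operator on $C^\infty([0,1],\mathfrak{A})$ and that the parametrized families above are jointly smooth, i.e. genuinely land in $C^\infty([0,1], -)$ in the sense demanded by the definition of smooth homotopy. The more delicate point, and the one I expect to require care, is verifying that all the relevant flatness conditions at $t=0,1$ are preserved; this rests on a Fa\`a di Bruno type chain-rule computation, and one must track the asymmetry of $\mathfrak{I}$, which imposes only $f(0)=0$ at the left endpoint but full flatness at the right one.
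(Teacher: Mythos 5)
Your proof is correct and follows the same overall route as the paper: the cone is contracted by the linear reparametrization $f\mapsto f(s+(1-s)t)$, and $S^\infty\mathfrak{A}$ is exhibited as a smooth deformation retract of $\mathfrak{I}$ by precomposing with a reparametrization of $[0,1]$ that trivializes the endpoint behaviour, joined to the identity by linear interpolation in the parameter. The one genuine difference is the choice of reparametrizing map: the paper uses $t\mapsto e^{1-1/t}$, which is flat but not locally constant at $t=0$, and therefore has to run an explicit inductive Fa\`a di Bruno computation showing that every derivative of $f(e^{1-1/t})$ is a combination of terms $f^{(k)}(e^{1-1/t})\,e^{l(1-1/t)}/t^m$ vanishing at both endpoints; your $\phi$, being locally constant near $0$ and $1$, makes $r(f)$ literally constant near the endpoints, so membership of $r(f)$ in $S^\infty\mathfrak{A}$ is immediate. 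This buys a cleaner definition of the retraction at the cost of introducing an auxiliary cutoff, and you correctly note that the chain-rule bookkeeping does not disappear entirely: for intermediate $s$ the map $\phi_s=(1-s)\,\mathrm{id}+s\phi$ is no longer locally constant at the endpoints, so preservation of the defining conditions of $\mathfrak{I}$ (only $f(0)=0$ on the left, full flatness on the right) and of $S^\infty\mathfrak{A}$ still rests on the observation that all derivatives of $\Phi_sf$ at an endpoint are polynomial combinations of the (vanishing) derivatives of $f$ there — which is exactly the computation the paper performs for its own homotopy $G_s$. Both arguments are complete modulo the routine boundedness of precomposition operators, which neither you nor the paper verifies in full detail.
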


\begin{proof}
By Proposition \ref{invariance}, it suffices to show that $C^\infty\mathfrak{A}$ is smoothly homotopic to $0$ to obtain the former isomorphism. The map
\[ F : C^\infty\mathfrak{A} \to C^\infty ([0, 1], C^\infty\mathfrak{A}) \]
defined by
\[ F_s(f)(t)=f(s+(1-s)t) \quad (f\in C^\infty\mathfrak{A}, \quad s, t\in [0, 1]) \]
gives a smooth homotopy on $C^\infty\mathfrak{A}$. Since $F_0$ is the identity on $C^\infty\mathfrak{A}$ and for any $f\in C^\infty\mathfrak{A}$,
\[ F_1(f)(t)=f(1)=0. \]

We know that $C^\infty\mathfrak{A}$ is smoothly homotopic to $0$. For the latter one, we introduce the map $t\mapsto f(e^{1-1/t}) \quad (f\in C^\infty\mathfrak{A}, \, t\in [0, 1])$, which belongs to $S^\infty\mathfrak{A}$. Indeed, we note that for any $n\geq 1$, $\dfrac{d^n}{dt^n}f(e^{1-1/t})$ is a linear combination of some functions such as
\[ f^{(k)}(e^{1-1/t})\frac{e^{l(1-1/t)}}{t^m} \quad (k, l, m\geq 1). \]
In fact, for $n=1$, we have that
\[ \frac{d}{dt}f(e^{1-1/t})=f^{(1)}(e^{1-1/t})\frac{e^{1-1/t}}{t^2}. \]
Suppose that the function $\dfrac{d^n}{dt^n}f(e^{1-1/t})$ is a linear combination of fuctions
\[ f^{(k)}(e^{1-1/t})\dfrac{e^{l(1-1/t)}}{t^m} \quad (k, l, m\geq 1), \]
then we deduce that
\begin{align*}
&\frac{d}{dt}\left(f^{(k)}(e^{1-1/t})\frac{e^{l(1-1/t)}}{t^m}\right) \\
&=f^{(k+1)}(e^{1-1/t})\dfrac{e^{(l+1)(1-1/t)}}{t^{m+2}}+lf^{(k)}\dfrac{e^{l(1-1/t)}}{t^{m+2}}-mf^{(k)}(e^{1-1/t})\dfrac{e^{l(1-1/t)}}{t^{m+1}},
\end{align*}
so is $\dfrac{d^{n+1}}{dt^{n+1}}f(e^{1-1/t})$. Because of the following equalities:
\begin{align*}
\lim_{t\to 0+}f^{(k)}(e^{1-1/t})\frac{e^{l(1-1/t)}}{t^m}&=f_+^{(n)}(0)\cdot 0=0 \\
\lim_{t\to 1-0}f^{(k)}(e^{1-1/t})\frac{e^{l(1-1/t)}}{t^m}&=f_-^{(n)}(1)=0,
\end{align*}
for any $f\in \mathfrak{I}, \, k, l, m\geq 1$, the function $f(e^{1-1/t})$ belongs to $S^\infty\mathfrak{A}$. Let
\[ r : \mathfrak{I} \to S^\infty\mathfrak{A} \]
be the map defined by
\[ r(f)(t)=f(e^{1-1/t}) \quad (f\in \mathfrak{I}, \, t\in [0, 1]) \]
and $i$ the natural inclusion from $S^\infty\mathfrak{A}$ into $\mathfrak{I}$. For the proof that $r\circ i$ is smoothly homotopic to the identity on $S^\infty\mathfrak{A}$, we use the bounded homomorphism
\[ G : S^\infty\mathfrak{A} \to C^\infty([0, 1], S^\infty\mathfrak{A}) \]
defined by
\[ G_s(f)(t)=f(se^{1-1/t}+(1-s)t) \quad (f\in S^\infty\mathfrak{A}, \, s, t\in [0, 1]) \]
which gives a smooth homotopy connecting $r\circ i$ and the identity on $S^\infty\mathfrak{A}$. We firstly show that $G_s(f)\in S^\infty\mathfrak{A}$ for any fixed $f\in S^\infty\mathfrak{A}, s\in [0, 1]$. Since
\[ \dfrac{d}{dt}G_s(f)(t)=f^{(1)}(se^{1-1/t}+(1-s)t)\left(\dfrac{s}{t^2}e^{1-1/t}+1-s\right), \]
we know that
\begin{align*}
\lim_{t\to 0+}\dfrac{d}{dt}G_s(f)(t)&=f_+^{(1)}(0)\cdot (1-s)=0 \\
\lim_{t\to 1-0}\dfrac{d}{dt}G_s(f)(t)&=f_-^{(1)}(1)=0. \\
\end{align*}
For general $n\geq 2$, we also see that
\[ \lim_{t\to 0+}\dfrac{d^n}{dt^n}G_s(f)(t)=\lim_{t\to 1-0}\dfrac{d^n}{dt^n}G_s(f)(t)=0. \]
The case for $n=1$ has already been shown. It suffices to show that for $n\geq 2$, the fuction $\dfrac{d^n}{dt^n}G_s(f)(t)$ is a linear combination of fuctions like
\[ f^{(k)}(se^{1-1/t}+(1-s)t)\dfrac{e^{l(1-1/t)}}{t^m} \quad (k, l, m\geq 1). \]
We now calculate that
\begin{align*}
&\dfrac{d}{dt}f^{(k)}(se^{1-1/t}+(1-s)t)\dfrac{e^{l(1-1/t)}}{t^m} \\
&=f^{(k+1)}(se^{1-1/t}+(1-s)t)\dfrac{e^{l(1-1/t)}}{t^m}\left(\dfrac{s}{t^2}+1-s\right) \\
&\quad +f^{(k)}(se^{1-1/t}+(1-s)t)\left(\dfrac{le^{l(1-1/t)}}{t^{m+2}}-\dfrac{me^{l(1-1/t)}}{t^{m+1}}\right),
\end{align*}
which completes the induction process. Moreover we see that $\dfrac{d^n}{dt^n}G_s$ is uniformly bounded on $[0, 1]$ for each $n\geq 1$. We note that the function
\[ t\mapsto \dfrac{e^{l(1-1/t)}}{t^m} \]
is bounded on $[0, 1]$ and that $f^{(k)}$ is also bounded since $f\in C^\infty([0, 1], \mathfrak{A})$. Hence $G$ is a smooth homotopy connecting $r\circ i$ and the identity on $S^\infty\mathfrak{A}$ since $G_1=r\circ i$ and $G_0$ is the identity on $S^\infty\mathfrak{A}$. Similarly, $i\circ r$ and the identity on $\mathfrak{I}$ are smoothly homotopic via the smooth homotopy defined by the same way as $G$, which implies that
\[ HE^*(\mathfrak{I})\simeq HE^*(S^\infty\mathfrak{A}) \]
as desired.
\end{proof}


\section{Toeplitz $F^*$-Algebras}
In this section, we construct smooth Toeplitz algebras based on 1-torus and to analyze them. They could be viewd as a quantization of 2-disc (cf. \cite{bhms,klimek}). Let $\{z^n\}_{n\in\Z}$ be the orthonomal basis of the Hilbert space $L^2(T)$ of all square integrable functions on the 1-torus $T$, where $z^n(t)=t^n \, (t\in T, n\in\Z)$, and $H^2=H^2(T)$ the Hardy space on $T$ which is a closed subspace of $L^2(T)$ spanned by $\{z^n\}_{n\geq 0}$. For $f\in C^\infty(T)$ of all infinitely differentiable functions on $T$, in which we mean that the derivation is defined by
\[
\frac{d}{dt}f(t)=\lim_{r\to 0}\frac{f(e^{2\pi ir}t)-f(t)}{r},
\]
we define the operator $T_f$ for $f\in\ct$ by
\[ T_f\xi=Pf\xi \quad (\xi\in H^2), \]
where $P$ is the projection onto $H^2$. We consider the $^*$-algebra $\mathcal{P}$ generated by $T_{z^j}\, (j\in\Z)$, namely,
\[ \mathcal{P}=\bigcup_{N\in\Z_{\geq 0}}\left\{ \left. \sum_{i_j\in \Z, \, |i_j|\leq N}c_{i_1, \dots , i_n}T_{z^{i_1}}\dots T_{z^{i_n}} \right| \, c_{i_1, \dots , i_n}\in\C, n\in\Z_{\geq 0} \right\}. \]
Since $T_fT_g-T_{fg}$ is a compact operator for any $f, g\in\ct$ and $T_f$ is compact if and only if $f=0$ (cf. \cite{davidson}), it is easily seen by induction that for any $T\in\mathcal{P}$, there is a unique $f\in\ct$ and a unique compact operator $S$ with $T=T_f+S$. Actually, if
\[ T=\sum_{i_j\in \Z, \, |i_j|\leq N}c_{i_1, \dots , i_n}T_{z^{i_1}}\dots T_{z^{i_n}}\in\mathcal{P}, \]
then $T=T_f+S$, where
\[ f=\sum_{i_j\in \Z, \, |i_j|\leq N}c_{i_1, \dots , i_n}z^{i_1}\dots z^{i_n} 
\]
and the compact operator $S$ is a linear combination of the operators of the form
\[
T_{z^{l_1}}\cdots T_{z^{l_k}}(T_{z^n}T_{z^m}-T_{z^{n+m}})T_{z^{l'_1}}\cdots T_{z^{l'_{k'}}} \quad (l_1, \dots l_{k}, \dots l'_1, \dots l'_{k'}, n, m\in\Z ).
\]
We show that there exists a function $K_S(t, s)\in C^\infty(T^2)$ which is a polynomial of $t, s$ and satisfies
\[ (S\xi)(t)=\int_T K_S(t, s)\xi(s)ds. \quad (\xi\in H^2). \]
This function $K_S$ is called the kernel function of $S$. Given $n, m\in\Z$, it is easily verified that
\begin{align*}
(T_{z^n}T_{z^m}-T_{z^{n+m}})\xi(t)&=\left(\sum_{k\geq \max\{-m, -n\}}-\sum_{k\geq -m-n}\right)\langle\xi\, |\, z^k\rangle z^k(t) \\
&=\int_T\left(\sum_{k\geq \max\{-m, -n\}}-\sum_{k\geq -m-n}\right)t^ks^{-k}\xi(s)ds,
\end{align*}
where 
\[ \langle f \, | \, g\rangle=\int_T f(s)\overline{g(s)}ds \quad (f, g\in L^2(T)) \]
is the usual inner product on $L^2(T)$. Then the kernel function $K_{T_{z^n}T_{z^m}-T_{z^{n+m}}}$ of $T_{z^n}T_{z^m}-T_{z^{n+m}}$ is a finitie sum of the fuctions $t^ks^{-k}$ since there exists a finite subset $I_{n, m}\subset\Z$ such that
\[
K_{T_{z^n}T_{z^m}-T_{z^{n+m}}}(t, s)=\left(\sum_{k\geq \max\{-m, -n\}}-\sum_{k\geq -m-n}\right)t^ks^{-k}=\pm\sum_{k\in I_{n, m}}t^ks^{-k}
\]
(when $I_{n, m}$ is empty, we regard the function $K_{T_{z^n}T_{z^m}-T_{z^{n+m}}}(t, s)=0$). Moreover, given $l\in\Z$, we compute that
\begin{align*}
K_{(T_{z^n}T_{z^m}-T_{z^{n+m}})T_{z^l}}(t, s)&=\int_T K_{T_{z^n}T_{z^m}-T_{z^{n+m}}}(t, r)K_{T_{z^l}}(r, s)dr \\
&=\pm\int_T\sum_{k\in I_{n, m}} t^k r^{-k}\sum_{k'\geq -l}r^{k'}s^{-k'}dr \\
&=\pm\sum_{k'\geq -l}\sum_{k\in I_{n, m}}t^ks^{-k'}\int_Tr^{k'-k}dr \\
&=\pm\sum_{k\in I_{n, m}, \, k\geq -l}t^ks^{-k},
\end{align*}
which implies that the kernel function $K_{(T_{z^n}T_{z^m}-T_{z^{n+m}})T_{z^l}}$ is a polynomial of $t, s$. By the similar computation, it follows that for $l, m, n\in\Z$, the kernel function \\
$K_{T_{z^l}(T_{z^n}T_{z^m}-T_{z^{n+m}})}$ is also a polynomial. Then, by the inductive argument, we have that the kernel functions
\[
K_{T_{z^{l_1}}\cdots T_{z^{l_k}}(T_{z^n}T_{z^m}-T_{z^{n+m}})T_{z^{l'_1}}\cdots T_{z^{l'_{k'}}}} \]
are also polynomials, which in particular belong to $C^\infty(T^2)$.

Let $\comp^\infty$ be the set of all compact operators $S$ such that there exists a function $K_S\in C^\infty(T^2)$ with the property that
\[ (S\xi)(t)=\int_T K_S(t, s)\xi(s)ds \quad (\xi\in H^2, \, t\in T). \]
By the above argument, it follows that for each operator $T\in\mathcal{P}$, there exist a function $f\in\ct$ and an operator $S\in\comp^\infty$ with $T=T_f+S$. Since $T_g$ is compact if and only if $g=0$, the function $f$ and the operator $S$ are uniquely determined. We define the seminorms $\{\|\cdot\|_{k, l, m}\}$ on $\mathcal{P}$ by
\[ \| T_f+S \|_{k, l, m}=\| f^{(k)} \|_\infty+\|K_S^{(l, m)}\|_\infty \quad (k, l, m\in\Z_{\geq 0}), \]
where $f^{(k)}$ is the $k$-th derivative of $f$,
\[ K^{(l, m)}=\dfrac{\partial^{l+m}}{\partial t^l \partial s^m}K(t, s) \quad (K\in C^\infty(T^2)), \]
and $\|\cdot\|_\infty$ mean the supremum norms on the corresponding function spaces. 
\begin{definition}
The smooth Toeplitz algebra $\toep$ is defined by the completion of $\mathcal{P}$ with respect to the topology induced by the seminorms $\{\|\cdot\|_{k, l, m}\}$.
\end{definition}
Similarly as in the case of $\mathcal{P}$, we have that for any $T\in\toep$, there exist a function $f\in\ct$ and an operator $S\in\comp^\infty$ with $T=T_f+S$. In fact, if $\{T_n\}_{n\geq 1}\subset\mathcal{P}$ converges to $T$ with respect to the seminorms $\{\|\cdot\|_{k, l, m}\}$ with $T_n=T_{f_n}+S_n$, we compute that
\begin{align*}
\|f_n^{(k)}-f_{n'}^{(k)}\|_\infty&=\|T_{f_n}-T_{f_{n'}}\|_{k, 0, 0} \\
&\leq \|T_n-T_{n'}\|_{k, 0, 0}\to 0 \quad (n, n'\to\infty),
\end{align*}
for any $k\in\Z_{\geq 0}$, which ensures that there exists the function $f\in\ct$ such that $f_n\to f$ with respect to the seminorms. Alternatively, since $\{ S_n \}$ is also Cauchy, we have that for any $k, l, m\in\Z$,
\[ \|S_n-S_{n'}\|_{k, l, m}=\|K_{S_n}^{(l, m)}-K_{S_{n'}}^{(l, m)}\|_\infty \to 0 \quad (n, n'\to\infty). \]
Hence, we find a function $K\in C^\infty(T^2)$ with $K_{S_n}\to K$ as $n\to\infty$ with respect to Fr\'echet topology on $C^\infty(T^2)$. Then the operator $S$ defined by
\[ S\xi(t)=\int_T K(t, s)\xi(s)ds \quad (\xi\in H^2, \, t\in T) \]
belongs to $\comp^\infty$ and $S_n-S\to 0$ as $n\to\infty$ with respect to the seminorms, which implies the conclusion. It is clear by the above argument that $\comp^\infty$ is a $^*$-ideal of $\toep$ and Fr\'echet closed. 

We define a homomorphism $q : \toep \to \ct$ by $q(T_f+S)=f$, which is continuous with respect to the seminorms cited before. The following lemma is already clear:

\begin{lemma}
We obtain the following short exact sequence as $F^*$-algebras:
\[
\begin{CD}
0 @>>> \comp^\infty @>i>> \toep @>q>> \ct @>>> 0,
\end{CD}
\]
where $i$ is the canonical inclusion.
\end{lemma}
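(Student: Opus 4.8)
The plan is to read off every assertion from the normal form established just above: each $T\in\toep$ can be written as $T=T_f+S$ with $f\in\ct$ and $S\in\comp^\infty$, and this decomposition is unique because $T_g$ is compact exactly when $g=0$. Granting this, the map $q(T_f+S)=f$ is well defined, and I would first check it is a $^*$-homomorphism. For multiplicativity, expand $(T_f+S)(T_g+S')=T_fT_g+(ST_g+T_fS'+SS')$; the bracketed term lies in $\comp^\infty$ since $\comp^\infty$ is an ideal of $\toep$, and $T_fT_g=T_{fg}+(T_fT_g-T_{fg})$ with $T_fT_g-T_{fg}\in\comp^\infty$, so $q((T_f+S)(T_g+S'))=fg=q(T_f+S)\,q(T_g+S')$. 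For the involution, $(T_f+S)^*=T_{\overline f}+S^*$ with $S^*\in\comp^\infty$, so $q$ intertwines $T\mapsto T^*$ with $f\mapsto\overline f$. Continuity of $q$ with respect to the seminorms $\{\|\cdot\|_{k,l,m}\}$ was already observed, since $\|q(T)^{(k)}\|_\infty\leq\|T\|_{k,0,0}$.

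Next I would exhibit surjectivity together with a bounded linear section and compute the kernel. The assignment $\sigma\colon\ct\to\toep$, $\sigma(f)=T_f$, is well defined: trigonometric polynomials are dense in $\ct$, and from $\|T_p\|_{k,l,m}=\|p^{(k)}\|_\infty$ the sequence $\{T_{p_n}\}$ is Cauchy in $\toep$ whenever $p_n\to f$ in $\ct$, converging to $T_f$, which satisfies the same estimate; hence $\sigma$ is continuous, and visibly $q\circ\sigma=\mathrm{id}_{\ct}$, so $q$ is onto. Conversely $q(T_f+S)=0$ forces $f=0$, i.e. the element equals $S\in\comp^\infty$; together with the trivial inclusion $\comp^\infty\subseteq\ker q$ this gives $\ker q=\comp^\infty=\ran i$, which is exactness at $\toep$. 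Finally, $\comp^\infty$ is Fr\'echet-closed in $\toep$ and carries exactly the subspace topology (its defining seminorms are the $\|\cdot\|_{0,l,m}$), so $i$ is a topological embedding; and the decomposition $T\mapsto(T-T_{q(T)})\oplus q(T)$ exhibits $\toep$ as the topological direct sum $\comp^\infty\oplus\sigma(\ct)$, so $q$ is a topological quotient map. This yields the claimed short exact sequence of $F^*$-algebras.

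I do not expect a genuine obstacle, since all the required pieces were assembled in the preceding discussion; the only steps meriting a sentence are that $T_f$ is meaningful in $\toep$ for arbitrary $f\in\ct$ (not merely for trigonometric polynomials) and that the subspace topology on $\comp^\infty$ coincides with its intrinsic Fr\'echet topology — both of which follow at once from the density of trigonometric polynomials in $\ct$ and the explicit form of the seminorms $\{\|\cdot\|_{k,l,m}\}$. It is also worth recording, with a view to the excision arguments later in the paper, that $\sigma$ furnishes the bounded linear splitting of $q$ that those arguments require.
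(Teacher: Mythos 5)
Your proposal is correct and follows exactly the route the paper has in mind: the paper declares the lemma ``already clear'' from the unique decomposition $T=T_f+S$, the ideal property of $\comp^\infty$, and the continuity of $q$, all established in the preceding discussion, and your argument is just a careful write-up of those same ingredients (including the bounded section $f\mapsto T_f$ that the paper later uses in Corollary 3.8). No gaps.
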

We next deduce the following lemma, which is a smooth version of $C^*$-algebra case:

\begin{lemma}\label{compact}
We have the following isomorphism:
\[ \comp^\infty\simeq \varinjlim (M_n(\C), \varphi_{n}), \]
where the homomorphisms $\varphi_{n} : M_n(\C)\to M_{n+1}(\C)$ are given by
\[ \varphi_n(A)=
\begin{pmatrix}
A & 0 \\
0 & 0
\end{pmatrix}
\quad (A\in M_{n}(\C), \, n\geq 1).
\]
\end{lemma}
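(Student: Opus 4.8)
The plan is to construct an explicit exhausting increasing sequence of finite-dimensional $^*$-subalgebras of $\comp^\infty$ and identify the connecting maps with the $\varphi_n$. For $N \geq 1$, let $\comp^\infty_N \subset \comp^\infty$ consist of those $S \in \comp^\infty$ whose kernel function $K_S(t,s)$ is of the form $\sum_{0 \leq k, k' \leq N-1} a_{kk'} t^k s^{-k'}$ with $a_{kk'} \in \C$; equivalently, $S$ is supported on the span of $\{z^0, \dots, z^{N-1}\} \subset H^2$. Each $\comp^\infty_N$ is a $^*$-subalgebra isomorphic to $M_N(\C)$ via $S \mapsto (a_{kk'})_{k,k'}$ (composition of such operators corresponds to matrix multiplication by the orthogonality relations $\int_T r^{j} dr = \delta_{j,0}$ already used in the kernel computations above), and the inclusion $\comp^\infty_N \hookrightarrow \comp^\infty_{N+1}$ is visibly the map $\varphi_N$, since adjoining $z^N$ to the invariant subspace pads the matrix by a zero row and column.

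Next I would verify that $\bigcup_N \comp^\infty_N$ is dense in $\comp^\infty$ for the Fr\'echet topology given by the seminorms $\|S\|_{l,m} = \|K_S^{(l,m)}\|_\infty$. Given $S \in \comp^\infty$ with kernel $K_S \in C^\infty(T^2)$, the condition $S\xi \in H^2$ for $\xi \in H^2$ forces $K_S(t,s)$ to have, as a Fourier series in $t$, only nonnegative frequencies, and as a Fourier series in $s$ only nonpositive frequencies; so $K_S(t,s) = \sum_{k \geq 0,\ k' \geq 0} a_{kk'} t^k s^{-k'}$ with the coefficients $(a_{kk'})$ of rapid decay (Schwartz-type decay in $(k,k')$, being the Fourier coefficients of a smooth function on $T^2$). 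Truncating the series to $0 \leq k, k' \leq N-1$ produces an element of $\comp^\infty_N$, and the tail estimate in each seminorm $\|\cdot\|_{l,m}$ goes to $0$ as $N \to \infty$ by the rapid decay, exactly as for smooth functions on the torus. This gives density.

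Finally, these two facts combine: $\comp^\infty$ is the completion of the increasing union $\bigcup_N \comp^\infty_N \cong \bigcup_N M_N(\C)$, and a completion of an algebraic inductive limit along isometric-enough inclusions realizes (the closure of) that inductive limit; since the transition maps $\varphi_n$ are injective and the universal property of $\varinjlim(M_n(\C),\varphi_n)$ identifies it with the algebraic union, we conclude $\comp^\infty \simeq \varinjlim(M_n(\C),\varphi_n)$ as $F^*$-algebras (the inductive limit here being understood in the completed/topological sense). I expect the only genuine point requiring care — the main obstacle — to be the density argument: one must pin down precisely which smooth kernels $K \in C^\infty(T^2)$ arise from operators $H^2 \to H^2$ (the one-sided Fourier support condition) and check that the naive truncation converges in \emph{all} the seminorms simultaneously, which is where the rapid decay of the Fourier coefficients of a smooth function on $T^2$ is essential; everything else is bookkeeping already modeled on the kernel computations carried out earlier in the section.
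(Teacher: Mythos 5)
Your proposal is correct and follows essentially the same route as the paper: your subalgebras $\comp^\infty_N$ are exactly the corners $P_N\comp^\infty P_N$ used there (the paper realizes the isomorphism with $M_N(\C)$ via explicit matrix units $E_{ij}$ with kernels $t^{j-1}s^{-(i-1)}$), and your density step — truncating the one-sided Fourier expansion of $K_S$ and using the Schwartz decay of its coefficients to get convergence in every seminorm $\|\cdot\|_{l,m}$ — is precisely the paper's argument that $\|S-P_NSP_N\|_{l,m}\to 0$. No substantive difference.
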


\begin{proof}
Let $P_n \, (n\geq 1)$ be the orthogonal projections on $H^2$ defined by
\begin{align*}
P_n\xi(t)&=\sum_{k=0}^{n-1}\langle \xi \, | \, z^k\rangle z^k(t) \\
&=\sum_{k=0}^{n-1}\left(\int_T\xi(s)s^{-k}ds\right)t^k \\
&=\int_T\sum_{k=0}^{n-1}t^ks^{-k}\xi(s)ds \quad (\xi\in H^2),
\end{align*}
which implies that
\[
K_{P_n}(t, s)=\sum_{k=0}^{n-1} t^ks^{-k}. 
\]
Then $P_n\comp^\infty P_n$ is isomorphic to $M_n(\C)$. Indeed, the kernel function $K_{P_nSP_n}$ for $S\in\comp^\infty$ is calculated as follows: since
\begin{align*}
K_{SP_n}(t, s)&=\int_T K_S(t, r)K_{P_n}(r, s)dr \\
&=\int_T\sum_{k=0}^nr^ks^{-k}K_S(t, r)dr,
\end{align*}
we have that
\begin{align*}
K_{P_nSP_n}(t, s)&=\int_TK_{P_n}(t, u)K_{SP_n}(u, s)du \\
&=\int_T\sum_{k=0}^{n-1}t^ku^{-k}\left(\int_T\sum_{k'=0}^{n-1}r^{k'}s^{-k'}K_S(u, r)dr\right)du \\
&=\int_T\int_T\sum_{k, k'=0}^{n-1}t^ku^{-k}r^{k'}s^{-k'}K_S(u, r)drdu \\
&=\sum_{k, k'=0}^{n-1}t^ks^{-k'}\int_T\int_Tr^{k'}u^{-k}K_S(u, r)drdu \\
&=\sum_{k, k'=0}^{n-1}c_{k, k'}t^ks^{-k'},
\end{align*}
where
\[ c_{k, k'}=\int_T\int_Tr^{k'}u^{-k}K_S(u, r)drdu \]
are the Fourier coefficients of $K_S\in C^\infty(T^2)$.

On the other hand, we define the matrix units $E_{ij}$ in what follows: when $i=j$, we define
\[
E_{ii}=T_{z^{i-1}}T_{z^{i-1}}^*-T_{z^i}T_{z^i}^*.
\]
For $i\not=j$, we define
\[
E_{ij}=
\begin{cases}
T_{z^{j-i}}E_{ii} & (i<j) \\
E_{jj}T_{z^{j-i}} & (i>j).
\end{cases}
\]
It is not hard to see that $\{E_{ij}\}$ forms a family of matrix units. By taking $m=-n$ in the computation of the kernel function of $T_{z^n}T_{z^m}-T_{z^{n+m}}$, we have
\[
K_{I-T_{z^n}T_{z^n}^*}(t, s)=\sum_{k=0}^{n-1}t^ks^{-k}.
\]
Hence we have
\[
K_{E_{ii}}(t, s)=t^{i-1}s^{-(i-1)}.
\]
More generally, we obtain that
\[
K_{E_{ij}}(t, s)=t^{j-1}s^{-(i-1)}.
\]
Then $P_n\comp^\infty P_n$ is generated by the matrix units $\{E_{ij}\}_{i, j=1}^n$ so that it is isomorphic to $M_n(\C)$ with the seminorms given by
\[
\|(\lambda_{kl})\|_{p, q}=\sup_{t, s\in T}\left|\sum_{k, l=0}^{n-1}\lambda_{kl}l^pk^qt^ls^{-k}\right| \quad ((\lambda_{kl})\in M_n(\C)).
\]
For any $S\in\comp^\infty$, $\|S-P_nSP_n\|_{l, m}\to 0$ as $n\to\infty$ for any $l, m\geq 0$ since $\{c_{k, k'}\}$ belongs to the Schwartz space on $\Z^2$. Therefore,
\[ \|S-P_nSP_n\|_{l, m}\to 0 \quad (n\to\infty) \]
for any $l, m\in\Z_{\geq 0}$. Hence, the conclusion follows.
\end{proof}

By the above lemma, we deduce the following corollaries:

\begin{corollary}
$\comp^\infty$ is a simple $F^*$-algebra, which is equal to the commutator $F^*$-ideal $[\toep, \toep]$ of $\toep$.
\end{corollary}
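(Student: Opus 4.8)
The plan is to establish simplicity first and then read off the statement about the commutator ideal. For simplicity, let $J$ be a nonzero closed two-sided ideal of $\comp^\infty$; I would show $J=\comp^\infty$ by the standard argument for matricial limits, using the ingredients assembled in the proof of Lemma~\ref{compact}: $\comp^\infty$ contains the matrix units $E_{ij}$ (with kernel $K_{E_{ij}}(t,s)=t^{j-1}s^{-(i-1)}$), the compression $P_n\comp^\infty P_n$ is the linear span of $\{E_{ij}:1\le i,j\le n\}$, and $P_nSP_n\to S$ in $\comp^\infty$ for every $S$. Pick $0\ne S\in J$. Not every ``matrix entry'' $E_{ii}SE_{jj}$ can vanish, for otherwise $P_nSP_n=\sum_{i,j\le n}E_{ii}SE_{jj}=0$ for all $n$, forcing $S=0$; and for each $i,j$ the element $E_{ii}SE_{jj}$ is a scalar multiple of the single matrix unit $E_{ji}$. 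Hence some $E_{ji}\in J$, whence $E_{kl}=E_{kj}E_{ji}E_{il}\in J$ for all $k,l$, so $P_n\comp^\infty P_n\subseteq J$ for all $n$; since $\bigcup_n P_n\comp^\infty P_n$ is dense in $\comp^\infty$ and $J$ is closed, $J=\comp^\infty$.

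For the commutator $F^*$-ideal, note that the quotient map $q:\toep\to\ct$ has commutative range, so $q$ kills every commutator; thus all commutators lie in $\ker q=\comp^\infty$, which is Fr\'echet closed, and therefore the closed ideal $[\toep,\toep]$ they generate satisfies $[\toep,\toep]\subseteq\comp^\infty$. On the other hand $[\toep,\toep]\ne 0$: a direct computation with the unilateral shift gives $T_z^*T_z=I$ and $T_zT_z^*=I-E_{11}$, so $E_{11}=[T_z^*,T_z]\in[\toep,\toep]$. Being an ideal of $\toep$ contained in $\comp^\infty$, $[\toep,\toep]$ is in particular a closed two-sided ideal of $\comp^\infty$, and it is nonzero; by the simplicity just proved, $[\toep,\toep]=\comp^\infty$. (Equivalently, one can avoid invoking simplicity here and build all matrix units explicitly from $E_{11}$ via $E_{1j}=T_{z^{j-1}}E_{11}$, $E_{i1}=E_{11}T_{z^{1-i}}$, $E_{ij}=E_{i1}E_{1j}$, obtaining $\comp^\infty\subseteq[\toep,\toep]$ by the same density argument.)

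The calculations involved are routine: that the $E_{ij}$ lie in $\mathcal P\subseteq\toep$ uses $T_{z^j}^*=T_{z^{-j}}$; that $E_{ii}SE_{jj}$ is a scalar multiple of $E_{ji}$ and that left/right multiplication by the Toeplitz generators permutes matrix units as indexed follow from the kernel-function formulas already derived. The only point requiring any care — the ``main obstacle,'' such as it is — is the density/closedness step: one must know that $\bigcup_n P_n\comp^\infty P_n$ is dense in $\comp^\infty$ for every seminorm $\|\cdot\|_{l,m}$, i.e. that $P_nSP_n\to S$, which is precisely the Schwartz-decay statement established inside the proof of Lemma~\ref{compact}. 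With that in hand, each of the ideals under consideration is forced to be all of $\comp^\infty$ as soon as it is shown to contain a single matrix unit.
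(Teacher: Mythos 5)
Your argument is correct, and it is essentially the unwinding of Lemma \ref{compact} that the paper leaves implicit (the corollary is stated there without a written proof): simplicity via the matrix units $E_{ij}$, the convergence $P_nSP_n\to S$, and closedness of the ideal, and the commutator statement via $q(\toep)=\ct$ being commutative together with $E_{11}=[T_z^*,T_z]$. One small caution: with the paper's kernels $K_{E_{ij}}(t,s)=t^{j-1}s^{-(i-1)}$ these units multiply by the transposed rule $E_{ab}E_{cd}=\delta_{ad}E_{cb}$ (while your observation that $E_{ii}SE_{jj}$ is a multiple of $E_{ji}$ is consistent with this convention), so your identities $E_{kl}=E_{kj}E_{ji}E_{il}$ and $E_{ij}=E_{i1}E_{1j}$ should be reordered to $E_{kl}=E_{il}E_{ji}E_{kj}$ and $E_{ij}=E_{1j}E_{i1}$ --- a harmless relabeling that does not affect the proof.
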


In what follows, we study briefly the $F^*$-crossed products $\toep\rtimes_{\alpha_\theta}\Z$ of $\toep$ by the gauge action $\alpha_\theta$ of $\Z$. Let $\alpha_\theta$ be the action of $\Z$ on $\toep$ defined by
\[ \alpha_\theta(T_f)=T_{f_{\theta}} \quad (f\in\ct, n\in\Z), \]
where $f_{\theta}(z)=f(e^{2\pi i\theta}z)$, which gives a $F^*$-dynamical system $(\toep, \Z, \alpha_\theta)$. We also consider the unitary operator $U_\theta$ on $H^2$ defined by
\[
U_\theta\xi(t)=\xi(e^{2\pi i\theta}t) \quad (\xi\in H^2, t\in T).
\]
It is easily seen that $U_\theta^*\xi(t)=U_\theta^{-1}\xi(t)=\xi(e^{-2\pi i\theta}t)$. Then we form the $F^*$-crossed products $\toep\rtimes_{\alpha_\theta}\Z$ of the $F^*$-dynamical system $(\toep, \Z, \alpha_\theta)$, which could be viewd as the deformation quantization $(D^2\times S^1)_\theta$ of the solid torus $D^2\times S^1$. In fact, let $\toep[\Z]$ be the $^*$-algebra of all finite sums
\[
f=\sum_{n\in\Z, \, |n|\leq N}A_nU_\theta^n \quad (A_n\in\toep, \, N\in\Z_{\geq 0}),
\]
where its multiplication is determined by $U_\theta AU_\theta^{-1}=\alpha_\theta(A)$ and its $^*$-operation is given by $(AU_\theta)^*=\alpha_\theta^{-1}(A^*)U_\theta^{-1}$. For $f=\sum A_nU_\theta^n\in\toep[\Z]$, we induce the seminorms defined by
\[
\|f\|_{p, q, r, s}=\sup_{n\in\Z}(1+|n|^2)^p\|A_n\|_{q, r, s} \quad (p, q, r, s\in\Z_{\geq 0}).
\]
We define the $F^*$-crossed product $(D^2\times S^1)_\theta=\toep\rtimes_{\alpha_\theta}\Z$ by the completion of $\toep[\Z]$ with respect to the seminorms cited above. For $S\in\comp^\infty$, we calculate that
\begin{align*}
\alpha_\theta(S)\xi(t)&=U_\theta SU_\theta^* \xi(t) \\
&=U_\theta\int_T K_S(t, s)\xi(e^{-2\pi i\theta}s)ds \\
&=\int_T K_S(e^{2\pi i\theta}t, e^{2\pi i\theta}s)\xi(s)ds
\end{align*}
to obtain that
\[
K_{\alpha_\theta(S)}(t, s)=K_S(e^{2\pi i\theta}t, e^{2\pi i\theta}s).
\]
Therefore, we have $\alpha_\theta(\comp^\infty)=\comp^\infty$ so that we construct a $F^*$-dynamical system $(\comp^\infty, \Z, \alpha_\theta)$. Since
\begin{align*}
K_{\alpha_\theta(P_n)}(t, s)&=K_{P_n}(e^{2\pi i\theta}t, e^{2\pi i\theta}s) \\
&=\sum_{k=0}^{n-1}(e^{2\pi i\theta}t)^k(e^{2\pi i\theta}s)^{-k} \\
&=\sum_{k=0}^{n-1}t^ks^{-k}=K_{P_n}(t, s),
\end{align*}
we have $\alpha_\theta(P_n\comp^\infty P_n)=P_n\comp^\infty P_n$. Therefore, we also construct $F^*$-dynamical systems $(P_n\comp^\infty P_n, \Z, \alpha_\theta^{(n)})$, where $\alpha_\theta^{(n)}$ are the restrictions of $\alpha_\theta$ on $P_n\comp^\infty P_n$. Let $i_n$ be the isomorphism from $P_n\comp^\infty P_n$ onto $M_n(\C)$ defined before and $i=\varinjlim i_n$ the isomorphism from $\varinjlim P_n\comp^\infty P_n$ onto $\comp^\infty$ induced by the isomorphisms $i_n$. We write by $\overline{\alpha}_\theta^{(n)}$ the action $i_n\circ\alpha_\theta\circ i_n^{-1}$.

\begin{proposition}\label{iso:1}
We have the following isomorphism:
\[
\comp^\infty\rtimes_{\alpha_\theta}\Z\simeq \varinjlim (M_n(\C)\rtimes_{\overline{\alpha}_\theta^{(n)}}\Z, \widetilde{\varphi}_n),
\]
where $\widetilde{\varphi}_n$ are the inclusions induced naturally by $\varphi_n$.
\end{proposition}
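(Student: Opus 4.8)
The plan is to run the argument in parallel with Lemma \ref{compact}, using the fact that in the $F^*$-algebra framework the crossed product with $\Z$ is compatible with inductive limits of the type occurring there, exactly as for $C^*$-algebras. Write $\iota_n : P_n\comp^\infty P_n \hookrightarrow P_{n+1}\comp^\infty P_{n+1}$ for the inclusions, so that $\comp^\infty=\varinjlim(P_n\comp^\infty P_n,\iota_n)$ in the sense of Lemma \ref{compact}. Since $\alpha_\theta(P_n\comp^\infty P_n)=P_n\comp^\infty P_n$ (via $\alpha_\theta(P_n)=P_n$, already checked) and $\iota_n$ trivially intertwines $\alpha_\theta^{(n)}$ and $\alpha_\theta^{(n+1)}$, the map $i_{n+1}\circ\iota_n\circ i_n^{-1}:M_n(\C)\to M_{n+1}(\C)$ is exactly $\varphi_n$ (this is how $i=\varinjlim i_n$ was defined), and consequently $\varphi_n$ intertwines $\overline{\alpha}_\theta^{(n)}$ and $\overline{\alpha}_\theta^{(n+1)}$. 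Hence $\widetilde{\varphi}_n(\sum_k A_kV^k)=\sum_k\varphi_n(A_k)V^k$ ($V$ the implementing unitary) defines a bounded injective $^*$-homomorphism $M_n(\C)\rtimes_{\overline{\alpha}_\theta^{(n)}}\Z\to M_{n+1}(\C)\rtimes_{\overline{\alpha}_\theta^{(n+1)}}\Z$, a topological embedding, so that $\varinjlim(M_n(\C)\rtimes_{\overline{\alpha}_\theta^{(n)}}\Z,\widetilde{\varphi}_n)$ is well defined as the completion of $\bigcup_n M_n(\C)\rtimes_{\overline{\alpha}_\theta^{(n)}}\Z$ with respect to the crossed-product seminorms $\|\cdot\|_{p,q,r,s}$.

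The heart of the matter is a density statement: $\bigcup_n\bigl((P_n\comp^\infty P_n)\rtimes_{\alpha_\theta^{(n)}}\Z\bigr)$, regarded inside $\comp^\infty\rtimes_{\alpha_\theta}\Z$ through the inclusions, is dense. Let $f=\sum_{n\in\Z}A_nU_\theta^n\in\comp^\infty\rtimes_{\alpha_\theta}\Z$, fix $\e>0$ and a seminorm $\|\cdot\|_{p,q,r,s}$. For $|n|>M$ one has $(1+|n|^2)^p\|A_n\|_{q,r,s}\le(1+M^2)^{-1}(1+|n|^2)^{p+1}\|A_n\|_{q,r,s}\le(1+M^2)^{-1}\|f\|_{p+1,q,r,s}$, so the truncation $f_M=\sum_{|n|\le M}A_nU_\theta^n$ satisfies $\|f-f_M\|_{p,q,r,s}\to 0$; choose $M$ with $\|f-f_M\|_{p,q,r,s}<\e$. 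By Lemma \ref{compact}, $\|A_n-P_NA_nP_N\|_{q,r,s}\to 0$ as $N\to\infty$ for each $n$; as there are only finitely many indices $|n|\le M$, pick $N$ so large that $\|A_n-P_NA_nP_N\|_{q,r,s}<\e(1+M^2)^{-p}$ for all $|n|\le M$. Then $g=\sum_{|n|\le M}(P_NA_nP_N)U_\theta^n$ lies in $(P_N\comp^\infty P_N)\rtimes_{\alpha_\theta^{(N)}}\Z$ and $\|f_M-g\|_{p,q,r,s}<\e$, whence $\|f-g\|_{p,q,r,s}<2\e$.

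Finally, for $A\in P_N\comp^\infty P_N\subset\comp^\infty$ the seminorms $\|A\|_{q,r,s}$ are given by the same formula computed inside $\comp^\infty$ and inside $P_N\comp^\infty P_N$, so the crossed-product seminorm family of $\comp^\infty\rtimes_{\alpha_\theta}\Z$ restricts on $(P_N\comp^\infty P_N)\rtimes_{\alpha_\theta^{(N)}}\Z$ to its own crossed-product seminorm family, which $i_N$ carries to the crossed-product seminorms on $M_N(\C)\rtimes_{\overline{\alpha}_\theta^{(N)}}\Z$. Therefore $\comp^\infty\rtimes_{\alpha_\theta}\Z$ is the completion of the increasing union $\bigcup_N(P_N\comp^\infty P_N)\rtimes_{\alpha_\theta^{(N)}}\Z\simeq\bigcup_N M_N(\C)\rtimes_{\overline{\alpha}_\theta^{(N)}}\Z$ with the connecting maps $\widetilde{\varphi}_N$, i.e.\ $\comp^\infty\rtimes_{\alpha_\theta}\Z\simeq\varinjlim(M_N(\C)\rtimes_{\overline{\alpha}_\theta^{(N)}}\Z,\widetilde{\varphi}_N)$. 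I expect the main obstacle to be the bookkeeping of the second paragraph: verifying that the $F^*$-crossed-product topology behaves well under restriction to the corners $P_N\comp^\infty P_N$ and under the inductive limit — the smooth counterpart of the standard fact that $C^*$-crossed products commute with direct limits — and in particular that the approximations in $N$ can be chosen uniformly over the finitely many surviving group indices; the rapid-decay truncation together with Lemma \ref{compact} is exactly what makes this possible.
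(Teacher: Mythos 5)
Your proof is correct and takes essentially the same route as the paper: equivariance of the corner inclusions and of the isomorphisms $i_n$ with respect to $\alpha_\theta$, so that the $F^*$-crossed product commutes with the inductive limit of the corners $P_n\comp^\infty P_n$, which is then transported to $\varinjlim(M_n(\C)\rtimes_{\overline{\alpha}_\theta^{(n)}}\Z,\widetilde{\varphi}_n)$. The only difference is that you work out in detail (rapid-decay truncation in the group variable plus the corner compression $\|A-P_NAP_N\|_{q,r,s}\to 0$ from Lemma \ref{compact}) the density statement underlying the identification $\comp^\infty\rtimes_{\alpha_\theta}\Z\simeq\varinjlim(P_n\comp^\infty P_n\rtimes_{\alpha_\theta^{(n)}}\Z,\varphi_n)$, which the paper simply asserts from the equivariance.
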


\begin{proof}
Since $i\circ\overline{\alpha}_\theta^{(n)}=\alpha_\theta\circ i$ for any $n\geq 1$, we have
\[
\comp^\infty\rtimes_{\alpha_\theta}\Z\simeq \varinjlim (P_n\comp^\infty P_n\rtimes_{\alpha_\theta^{(n)}}\Z, \varphi_n),
\]
where $\varphi_n : P_n\comp^\infty P_n\rtimes_{\alpha_\theta^{(n)}}\Z \to P_{n+1}\comp^\infty P_{n+1}\rtimes_{\alpha_\theta^{(n+1)}}\Z$ are the canonical inclusions. Moreover, since $i_n\circ\alpha_\theta^{(n)}=\overline{\alpha}_\theta^{(n)}\circ i_n$, we find isomorphisms
\[
\psi_n : P_n\comp^\infty P_n\rtimes_{\alpha_\theta^{(n)}}\Z \overset{\simeq}{\longrightarrow} M_n(\C)\rtimes_{\overline{\alpha}_\theta^{(n)}}\Z.
\]
Then since $\psi_n\circ\varphi_n=\widetilde{\varphi}_n\circ\psi_n$, we conclude
\[
\varinjlim (P_n\comp^\infty P_n\rtimes_{\alpha_\theta^{(n)}}\Z , {\varphi}_n)
\simeq \varinjlim (M_n(\C)\rtimes_{\overline{\alpha}_\theta^{(n)}}\Z, \widetilde{\varphi}_n)
\]
as desired.
\end{proof}

Then we construct a $^*$-homomorphism 
\[
\rho_n : M_n(\C)\rtimes_{\overline{\alpha}_\theta^{(n)}}\Z \to M_n(\C)\hat{\otimes}_\gamma \mathcal{S}(\Z),
\]
where $\mathcal{S}(\Z)$ is the set of all rapidly decreasing sequences $\{c_n\}\subset \C$ and $\hat{\otimes}_\gamma$ means the tensor product of $F^*$-algebras completed by the topology induced by the seminorms defined by
\[
\left\|\sum_{j=1}^N x_j\otimes y_j\right\|_{k, l}=\inf\sum_{j=1}^N\|x_j\|_k\|y_j\|_l,
\]
where the infinimum is taken over the all representations of $\sum_{j=1}^N x_j\otimes y_j$. Equivalently, $M_n(\C)\hat{\otimes}_\gamma S(\Z)$ is regarded as $\mathcal{S}(\Z, M_n(\C))$ with the ordinary convolution as its product. For $x\in M_n(\C)\rtimes_{\alpha_\theta^{(n)}}\Z$, we define
\[
\rho_n(x)=xU_\theta^n.
\]
It is easily seen that it is an isomorphism. Moreover, since
\begin{align*}
\|\rho_n(x)\|_{p, q, r, s}=\sup_{m\in\Z}(1+m^2)^p\|x_mU_\theta^n\|_{q, r, s}=\|x\|_{p, q, r, s} \quad (p, q, r, s\in\Z_{\geq 0}),
\end{align*}
for any $x=\sum x_mU_\theta^m\in\toep\Z$, it is Fr\'echet isometry. Therefore, we have
\[
M_n(\C)\rtimes_{\overline{\alpha}_\theta^{(n)}}\Z\simeq M_n(\C)\hat{\otimes}_\gamma \mathcal{S}(\Z)
\]
by $\rho_n$. Now it is immediately known that the following fact follows:

\begin{corollary}\label{stableconti}
The isomorphism
\[
\comp^\infty\rtimes_{\alpha_\theta}\Z\simeq \comp^\infty\hat{\otimes}_\gamma \ct
\]
holds.
\end{corollary}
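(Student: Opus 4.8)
The plan is to chain together the inductive-limit descriptions already established and then pull the fixed tensor factor out of the limit. By Proposition~\ref{iso:1},
\[
\comp^\infty\rtimes_{\alpha_\theta}\Z\simeq \varinjlim \bigl(M_n(\C)\rtimes_{\overline{\alpha}_\theta^{(n)}}\Z,\ \widetilde{\varphi}_n\bigr),
\]
and the Fr\'echet-isometric isomorphisms $\rho_n\colon M_n(\C)\rtimes_{\overline{\alpha}_\theta^{(n)}}\Z\to M_n(\C)\hat{\otimes}_\gamma\mathcal{S}(\Z)$ constructed above identify each stage of this system with $M_n(\C)\hat{\otimes}_\gamma\mathcal{S}(\Z)$. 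The first task is to check that the $\rho_n$ carry the connecting maps $\widetilde{\varphi}_n$ to the corner inclusions $\varphi_n\otimes\mathrm{id}\colon M_n(\C)\hat{\otimes}_\gamma\mathcal{S}(\Z)\to M_{n+1}(\C)\hat{\otimes}_\gamma\mathcal{S}(\Z)$, so that the $\rho_n$ assemble into an isomorphism of inductive systems; granting this,
\[
\comp^\infty\rtimes_{\alpha_\theta}\Z\simeq \varinjlim \bigl(M_n(\C)\hat{\otimes}_\gamma \mathcal{S}(\Z),\ \varphi_n\otimes\mathrm{id}\bigr).
\]

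The second step is to commute the inductive limit past the tensor factor. Viewing $M_n(\C)\hat{\otimes}_\gamma\mathcal{S}(\Z)=\mathcal{S}(\Z,M_n(\C))$ and $\comp^\infty\hat{\otimes}_\gamma\mathcal{S}(\Z)=\mathcal{S}(\Z,\comp^\infty)$, and using Lemma~\ref{compact} to write $\comp^\infty=\varinjlim(M_n(\C),\varphi_n)$, one shows that $\bigcup_n\mathcal{S}(\Z,M_n(\C))$ is dense in $\mathcal{S}(\Z,\comp^\infty)$ --- this is the same argument as in Lemma~\ref{compact}, compressing by $P_n$ and using the rapid decay in the $\Z$-variable to make the approximation uniform --- while the seminorms match because the $\varphi_n$ are isometric embeddings onto corners; passing to completions gives
\[
\varinjlim \bigl(M_n(\C)\hat{\otimes}_\gamma \mathcal{S}(\Z),\ \varphi_n\otimes\mathrm{id}\bigr)\simeq \comp^\infty\hat{\otimes}_\gamma \mathcal{S}(\Z).
\]
Finally, the Fourier transform $\{c_m\}\mapsto\sum_m c_m z^m$ is an isomorphism of $F^*$-algebras from $\mathcal{S}(\Z)$ with convolution onto $\ct$ with pointwise multiplication, intertwining the involutions; tensoring with $\comp^\infty$ and composing all the displayed isomorphisms yields
\[
\comp^\infty\rtimes_{\alpha_\theta}\Z\simeq\comp^\infty\hat{\otimes}_\gamma\ct .
\]

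I expect the genuine obstacle to be the first step. The isomorphism $M_n(\C)\rtimes_{\overline{\alpha}_\theta^{(n)}}\Z\simeq M_n(\C)\hat{\otimes}_\gamma\mathcal{S}(\Z)$ rests on $\overline{\alpha}_\theta^{(n)}$ being implemented by the compression $u_n$ of $U_\theta$ to $P_nH^2$, which in the basis $z^0,\dots,z^{n-1}$ is the diagonal unitary $\mathrm{diag}\bigl(1,e^{2\pi i\theta},\dots,e^{2\pi i\theta(n-1)}\bigr)$; but $\varphi_n(u_n)$ and $u_{n+1}$ differ in their last diagonal entry, so the square relating $\rho_n$, $\rho_{n+1}$ and $\widetilde{\varphi}_n$ need not commute on the generator of $\Z$ and has to be repaired, e.g.\ by composing $\rho_{n+1}$ with conjugation by the multiplier unitary accounting for the discrepancy. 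Alternatively one can sidestep the inductive-limit bookkeeping entirely: $\alpha_\theta$ is implemented by the unitary multiplier $U_\theta$ of $\comp^\infty$, and the standard identification of a crossed product by an automorphism that is inner through the multiplier algebra gives $\comp^\infty\rtimes_{\alpha_\theta}\Z\simeq\comp^\infty\hat{\otimes}_\gamma\mathcal{S}(\Z)$ in one stroke. The remaining steps --- commuting $\varinjlim$ past $-\hat{\otimes}_\gamma\mathcal{S}(\Z)$ and the Fourier identification $\mathcal{S}(\Z)\simeq\ct$ --- are routine.
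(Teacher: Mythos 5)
Your proof follows the paper's own argument exactly: it chains Proposition~\ref{iso:1} with the stage-wise isomorphisms $\rho_n$, commutes the inductive limit past $-\hat{\otimes}_\gamma\mathcal{S}(\Z)$, and finishes with the Fourier transform $\mathcal{S}(\Z)\simeq\ct$; the paper simply records this chain of isomorphisms without verifying the intertwining you flag. The obstacle you anticipate in fact dissolves on its own: the connecting maps are non-unital corner embeddings, so $\varphi_n(x)\,u_{n+1}^{-m}=\varphi_n(x\,u_n^{-m})$ for every $x\in M_n(\C)$ (the discrepant last diagonal entry of $u_{n+1}$ is annihilated by the corner), and the squares commute without any repair.
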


\begin{proof}
By Proposition \ref{iso:1} and Lemma \ref{compact}, we have that
\begin{align*}
\comp^\infty\rtimes_{\alpha_\theta}\Z &\simeq \varinjlim (M_n(\C)\rtimes_{\overline{\alpha}_\theta^{(n)}}\Z, \widetilde{\varphi}_n) \\
&\simeq \varinjlim (M_n(\C)\hat{\otimes}_\gamma \mathcal{S}(\Z), \widetilde{\varphi}_n\otimes id_{\mathcal{S}(\Z)}) \\
&\simeq \left(\varinjlim (M_n(\C), \varphi_n)\right)\hat{\otimes}_\gamma \mathcal{S}(\Z) \\
&\simeq \comp^\infty \hat{\otimes}_\gamma \mathcal{S}(\Z).
\end{align*}
Since $\mathcal{S}(\Z)$ is isomorphic to $\ct$ sending by the Fourier transform, the conclusion follows.
\end{proof}

We end this section by stating the following fact:

\begin{corollary}\label{seq:2}
We have the following short exact sequence:
\[
\begin{CD}
0 @>>> \comp^\infty\otimes_\gamma \mathcal{S}(\Z) @>\widetilde{i}>> (D^2\times S^1)_\theta @>\widetilde{q}>> \ct\rtimes_{\overline{\alpha}_\theta}\Z @>>> 0,
\end{CD}
\]
where $\overline{\alpha}_\theta : \ct\times\Z \to \ct$ is the Fr\'echet continuous action defined by
\[
\overline{\alpha}_\theta^n(f)(z)=f(e^{2\pi in\theta}z) \quad (f\in\ct, z\in T),
\]
with a bounded linear section $\widetilde{s}$ of $\widetilde{q}$.
\end{corollary}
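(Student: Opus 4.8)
The plan is to obtain the sequence by applying the crossed product construction $-\rtimes_{\alpha_\theta}\Z$ to the short exact sequence of $\alpha_\theta$-equivariant $F^*$-algebras
\[
\begin{CD}
0 @>>> \comp^\infty @>i>> \toep @>q>> \ct @>>> 0
\end{CD}
\]
already established above. First I would record the equivariance that makes this legitimate: $\alpha_\theta(\comp^\infty)=\comp^\infty$ was checked via the kernel identity $K_{\alpha_\theta(S)}(t,s)=K_S(e^{2\pi i\theta}t,e^{2\pi i\theta}s)$, while on generators $q(\alpha_\theta(T_f))=q(T_{f_\theta})=f_\theta$, so the action induced on $\ct$ sends $f$ to $f_\theta$; iterating gives exactly $\overline{\alpha}_\theta^n(f)(z)=f(e^{2\pi in\theta}z)$, and $\overline{\alpha}_\theta$ is Fr\'echet continuous because each $\overline{\alpha}_\theta^n$ is a Fr\'echet isometry of $\ct$. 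I would also note that the map $s\colon\ct\to\toep$, $s(f)=T_f$, is bounded (since $\|T_f\|_{k,l,m}=\|f^{(k)}\|_\infty$) and $\alpha_\theta$-equivariant, so the extension is \emph{equivariantly linearly split}.

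Next I would push everything through the functor. The bounded $^*$-homomorphisms $i$ and $q$ act coefficientwise on the algebraic crossed products, $\widetilde i(\sum S_nU_\theta^n)=\sum i(S_n)U_\theta^n$ and $\widetilde q(\sum A_nU_\theta^n)=\sum q(A_n)U_\theta^n$, and are bounded for the crossed product seminorms; for instance $\|\widetilde q(\sum A_nU_\theta^n)\|_{p,q,r,s}=\sup_n(1+|n|^2)^p\|q(A_n)\|_{q,r,s}\le\sup_n(1+|n|^2)^p\|A_n\|_{q,r,s}$, so both extend to the Fr\'echet completions, where $\toep\rtimes_{\alpha_\theta}\Z=(D^2\times S^1)_\theta$ by definition. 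For the section I would set $\widetilde s(\sum f_nU_\theta^n)=\sum T_{f_n}U_\theta^n$; the estimate $\|\widetilde s(\sum f_nU_\theta^n)\|_{p,q,r,s}=\sup_n(1+|n|^2)^p\|f_n^{(q)}\|_\infty$ shows $\widetilde s$ is bounded and well defined on $\ct\rtimes_{\overline{\alpha}_\theta}\Z$, and $\widetilde q\circ\widetilde s=\mathrm{id}$ by construction.

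Exactness is then verified at each spot. Surjectivity of $\widetilde q$ is immediate from $\widetilde q\circ\widetilde s=\mathrm{id}$. For $S\in\comp^\infty\subset\toep$ the $\toep$-seminorm $\|S\|_{k,l,m}=\|K_S^{(l,m)}\|_\infty$ restricts to the intrinsic seminorms of $\comp^\infty$, so $\widetilde i$ preserves the relevant seminorms on the algebraic crossed products and extends to an isomorphism onto its closed image; in particular $\widetilde i$ is injective and a topological embedding. For middle exactness, on $\toep[\Z]$ one has $\widetilde q(\sum A_nU_\theta^n)=0$ iff every $q(A_n)=0$ iff every $A_n\in\comp^\infty$, so $\ker\widetilde q\cap\toep[\Z]=\comp^\infty[\Z]$; using the equivariant bounded linear section $s$ one shows $(D^2\times S^1)_\theta$ splits, as a topological vector space, into $\ker\widetilde q$ and a copy of $\ct\rtimes_{\overline{\alpha}_\theta}\Z$, whence $\ker\widetilde q$ is the closure of $\comp^\infty[\Z]$, i.e. $\comp^\infty\rtimes_{\alpha_\theta}\Z$. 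Finally I would rewrite $\comp^\infty\rtimes_{\alpha_\theta}\Z$ as $\comp^\infty\hat{\otimes}_\gamma\mathcal{S}(\Z)$ by Corollary~\ref{stableconti}, whose proof factors through $\comp^\infty\hat{\otimes}_\gamma\mathcal{S}(\Z)$ before the Fourier identification, producing the displayed short exact sequence together with the bounded linear section $\widetilde s$.

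The step I expect to be the main obstacle is the middle exactness \emph{after completion}: that no element of $\ker\widetilde q$ in the Fr\'echet completion escapes the closure of $\comp^\infty[\Z]$, and that $\widetilde i$ is a homeomorphism onto that kernel. This is exactly where the bounded \emph{equivariant} section $s$ of $q$ is indispensable, since it furnishes a continuous linear splitting that survives both the crossed product and the completion; it is the same linearly-split-extension mechanism that will later make Meyer's excision applicable.
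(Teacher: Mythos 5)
Your proposal is correct and follows essentially the same route as the paper: the paper also obtains the sequence by applying the crossed product to the equivariant extension $0\to\comp^\infty\to\toep\to\ct\to 0$, takes $\widetilde{s}(fU_\theta^n)=T_fU_\theta^n$ as the bounded linear section, and identifies the kernel via Corollary \ref{stableconti}. Your write-up merely supplies in detail (boundedness estimates, exactness after completion via the splitting) what the paper dismisses as ``clear.''
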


\begin{proof}
Since $i\circ \alpha_\theta^n=\alpha_\theta^n\circ i$ and $q\circ \alpha_\theta^n=\overline{\alpha}_\theta^n\circ q$ for all $n\in\Z$, it is clear that the desired short exact sequence holds and $\widetilde{s}(fU_\theta^n)=T_fU_\theta^n \, (f\in\ct, \, n\in\Z)$.
\end{proof}


\section{Metric Approximation Property}

We introduce an analogue of the notion of metric approximation property for Banach spaces \cite{choi}. Let $\ore, \mathfrak{B}$ be two Banach spaces and $\mathfrak{I}\subset\mathfrak{B}$ an $M$-ideal. In \cite{choi}, the authors prove that if $\ore$ is separable and has the metric approximation property, then each contractive map $\varphi : \ore\to\mathfrak{B}/\mathfrak{I}$ has a lift $\widetilde{\varphi} : \ore \to \mathfrak{B}$ which is contractive and satisfies $q\circ \widetilde{\varphi}=\varphi$, where $q : \mathfrak{B}\to\mathfrak{B}/\mathfrak{I}$ is the quotient map. Our purpose in this section is to define this property for $F^*$-algebras to prove lifting problem cited above. The topology on $\ore$ induced by its seminorms $\{\|\cdot\|_k\}_{k\geq 0}$ is same as that induced by the metric $d_{\ore}$ defined by
\[ d_{\ore}(a, b)=\sum_{k=0}^{\infty}\dfrac{1}{\,\, 2^k}\dfrac{\|a-b\|_k}{1+\|a-b\|_k} \quad (a, b\in\ore). \]
We say that a linear map $\varphi : \ore \to \mathfrak{B}$ is bounded if and only if there exists a constant $C>0$ with
\[
d_{\mathfrak{B}}(\varphi(a), 0)\leq Cd_{\ore}(a, 0) \quad (a\in\ore).
\]

\begin{definition}
Let $\ore$ be a $F^*$-algebra and $\{\|\cdot\|_k\}_{k\geq 0}$ its seminorms. We say that it has the metric approximation property if there exists a family of bounded linear maps $\{\theta_n\}_{n\geq 1}$ on $\ore$ with the following properties:
\begin{enumerate}
\item each $\theta_n$ has a finite rank, 
\item for any $a\in\ore$, $d_{\ore}(\theta_n(a), a)\to 0$ as $n\to\infty$.
\end{enumerate}
\end{definition}

We give some examples of $F^*$-algebras with the metric approximation property. Here we note that $d_{\ore}(\theta_n(a), a)\to 0$ is satisfied if and only if $\|\theta_n(a)-a\|_k \to 0$ for any $k\geq 0$. 

\begin{ex}
{\rm
For an integer $n\geq 2$, let $\mathbb{F}_n$ be the free group with $n$-generators. Given $g\in\mathbb{F}_n$, we denote by $|g|$ its word length, and for $f\in\C [\mathbb{F}_n]$ and an integer $k\in\Z_{\geq 0}$, we define seminorms by
\[ \|f\|_k=\sup_{g\in\mathbb{F}_n}(1+|g|)^k|f(g)|. \]
The Schwartz space $\mathcal{S}(\mathbb{F}_n)$ is defined by the completion of $\C [\mathbb{F}_n]$ with respect to the above seminorms. For $f\in\mathcal{S}(\mathbb{F}_n)$, we define an bounded operator $\lambda(f)$ on the Hilbert space $l^2(\mathbb{F}_n)$ by the convolution with $f$, that is,
\[ (\lambda(f)\xi)(g)=(f*\xi)(g)=\sum_{h\in\mathbb{F}_n}f(h)\xi(h^{-1}g) \quad (g\in\mathbb{F}_n, \xi\in l^2(\mathbb{F}_n)), \]
on which the seminorms are defined by
\[ \|\lambda(f)\|_k=\|f\|_k \quad (k\in\Z_{\geq 0}). \]
This definition is well-defined. Indeed, if $\lambda(f)=0$, then $\lambda(f)\delta_e=0$, where $e\in\mathbb{F}_n$ is the unit and the element $\delta_e\in l^2(\mathbb{F}_n)$ is defined by
\[
\delta_e(g)=
\begin{cases}
1 \quad (g=e) \\
0 \quad (g\not=e).
\end{cases}
\]
Hence, for any $g\in\mathbb{F}_n$, we have that
\begin{align*}
0&=(\lambda(f)\delta_e)(g)=(f*\delta_e)(g) \\
&=\sum_{h\in\mathbb{F}_n}f(h)\delta_e(h^{-1}g)=f(g).
\end{align*}
Therefore, $\lambda(f)=0$ leads $f=0$, which implies that the seminorms are well-defined. We define the $F^*$-algebra $C^*_r(\mathbb{F}_n)^\infty$ by the completion of the $^*$-algebra generated by the bounded operators $\lambda(f)\, (f\in\mathcal{S}(\mathbb{F}_n))$. Here we claim that
\[ C^*_r(\mathbb{F}_n)^\infty=\{\lambda(f) \, | \, f\in\mathcal{S}(\mathbb{F}_n)\}. \]
In fact, it is clear that $\lambda(f)^*=\lambda(f^*)$ for any $f\in \mathcal{S}(\mathbb{F}_n)$, where 
\[ f^*(g)=\overline{f(g^{-1})}\in \mathcal{S}(\mathbb{F}_n). \]
For any $T\in C^*_r(\mathbb{F}_n)^\infty$, there exits a family $\{\lambda(f_n)\}_{n\geq 1} \, (f_n\in\mathcal{S}(\mathbb{F}_n))$ which converges to $T$ with respcet to the seminorms cited above. Then, for any $k\in\Z_{\geq 0}$, we have that
\[
\|f_n-f_m\|_k=\|\lambda(f_n)-\lambda(f_m)\|_k \to 0 \quad (n, m\to\infty), 
\]
which implies that there exists a function $f\in\mathcal{S}(\mathbb{F}_n)$ which is the limit of $\{f_n\}$. Thus, for any $k\in\Z_{\geq 0}$, we have that
\[ \|T-\lambda(f)\|_k \leq \|T-\lambda(f_n)\|_k+\|\lambda(f_n)-\lambda(f)\|_k \to 0 \quad (n\to\infty). \]
We construct a family of finite dimensional bounded linear maps $\{\theta_k\}$ on $C^*_r(\mathbb{F}_n)^\infty$ in what follows. Given an integer $k\geq 1$, let $E_k=\{ g\in\mathbb{F}_n \, | \, |g|\leq k\}$ and $\chi_k$ the function on $\mathcal{S}(\mathbb{F}_n)$ defined by
\[
\chi_k(g)=
\begin{cases}
1 \quad (g\in E_k) \\
0 \quad (g\not\in E_k).
\end{cases}
\]
Since the number of elements of $E_k$ is finite for each $k\geq 1$, the linear maps $\psi_n : \mathbb{F}_n \to \mathbb{F}_n$ defined by
\[ \psi_k(\lambda(f))=\lambda(\chi_k f) \]
have finite ranks. We define the finite linear bounded maps $\theta_k : \mathbb{F}_n \to \mathbb{F}_n \, (k\geq 1)$ by
\[ \theta_k(\lambda(f))=\lambda(e^{-|\cdot|/k}\chi_k f). \]
Then for any $l\geq 0$ and $f\in C^*_r(\mathbb{F}_n)^\infty$, we compute that
\begin{align*}
&\| \lambda(f)-\theta_k(\lambda(f))\|_l \\
&\leq \|\lambda(f)-\lambda(e^{-|\cdot |/k}f)\|_l+\|\lambda(e^{-|\cdot|/k}f)-\lambda(e^{-|\cdot|/k}\chi_kf)\|_l \\
&=\|f-e^{-|\cdot |/k}f\|_l+\|e^{-|\cdot|/k}f-e^{-|\cdot|/k}\chi_kf\|_l \\
&=\sup_{g\in\mathbb{F}_n}\left|(1+|g|)^lf(g)(1-e^{-|g|/k})\right|+\sup_{g\in\mathbb{F}_n}\left|(1+|g|)^lf(g)e^{-|g|/k}(1-\chi_k(g))\right| \\
&\leq\|f\|_l\sup_{g\in\mathbb{F}_n}\left|(1-e^{-|g|/k})\right|
+\sup_{|g|\geq k+1}\left|(1+|g|)^lf(g)e^{-|g|/k}\right| \\
&\to 0 \quad (k\to\infty).
\end{align*}
Therefore, $C^*_r(\mathbb{F}_n)^\infty$ has the metric approximation property.
}
\end{ex}

\begin{ex}\label{tori}
{\rm
According to \cite{naito}, the smooth noncommutative 2-torus $\tori$ is isomorphic to the Fr\'echet inductive limit 
\[ \varinjlim \ct\hat{\otimes}_\gamma(M_{p_n}(\C)\oplus M_{q_n}(\C)). \]
We show that it also has the metric approximation property. As a preparation, we verify that the Fr\'echet algebra $\ct\hat{\otimes}_\gamma M_q(\C)$ has the metric approximation property. It suffices to show that $\ct$ has this property since if it had this property with a family $\{\theta_n^{(q)}\}$ of bounded linear maps there, the family $\{\theta_n^{(q)}\otimes I_q\}$ would be the desired one for $\ct\otimes M_q(\C)$, where $I_q$ is the identity map on $M_q(\C)$. For $f\in\ct$, we define the maps $\theta_n^{(q)} : \ct \to \ct$ by
\[ \theta_n^{(q)}(f)=\sum_{|l|\leq n}\hat{f}(l)z^l \quad (n\geq 1), \]
where $\hat{f}(l)$ are the Fourier coefficients and $z\in\ct$ is the canonical generator defined by $z(t)=t \, (t\in T)$. Then it is clear that they are of finite rank. For $f\in\ct$ and $k\in\Z_{\geq 0}$,
\begin{align*}
\|f-\theta_n^{(q)}(f)\|_k&=\|f^{(k)}-(\theta_n(f))^{(k)}\|_\infty \\
&=\sup_{m\in\Z}\left| \widehat{f^{(k)}}(m)-\sum_{|l|\leq n}\hat{f}(l)(2\pi il)^k\delta_l(m)\right| \\
&=\sup_{m\in\Z}\left|\sum_{|l|\geq n+1}\hat{f}(l)(2\pi il)^k\delta_l(m)\right| \\
&=\sup_{m\in\Z, |m|\geq n+1}\left|\hat{f}(m)(2\pi m)^k\right| \\
&\to 0 \quad (n\to\infty),
\end{align*}
where $\delta_l(m)=0 \, (m\neq l), =1(m=l)$, since $\{\hat{f}(l)\}_{l\in\Z}$ is a rapidly decreasing sequence by the hypothesis $f\in\ct$. Hence $\ct$ has the metric approximation property. 

We turn to show briefly that $\tori$ also has this property. For any $x\in\tori$, we define the sequence $\{x_n\}$ by
\[ x_n=e_1^{(n)}xe_1^{(n)}+e_2^{(n)}xe_2^{(n)} \quad (n\geq 1), \]
where $e_j^{(n)}\, (j=1, 2)$ are the projections such that
\[ e_1^{(n)}xe_1^{(n)}\in \ct\otimes M_{p_n}(\C), \quad e_2^{(n)}xe_2^{(n)}\in \ct\otimes M_{q_n}(\C) \]
for any $x\in\tori$ (\cite{naito}). We define the linear maps $\Phi_n$ on $\tori$ by
\[ \Phi_n(x)=\theta_n^{(p_n)}(e_1^{(n)}xe_1^{(n)})+\theta_n^{(q_n)}(e_2^{(n)}xe_2^{(n)}) \]
It is easily seen that $\Phi_n(x)\to x$ with respect to the seminorms on $\tori$ (see \cite{naito}), hence to the metric $d$ as well. Therefore, $\tori$ has the metric approximation property.
}
\end{ex}

By the similar argument for $\ct$, the operation of taking suspension preserves the metric approximation property.

\begin{corollary}\label{cor}
If a $F^*$-algebra $\ore$ has the metric approximation property, so does its suspension $S^\infty\ore$.
\end{corollary}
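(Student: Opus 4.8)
The plan is to mirror the argument given for $\ct$ in Example \ref{tori}, transplanting the approximating maps through the closed-interval parameter. Recall that $S^\infty\ore$ consists of those $f\in C^\infty([0,1],\ore)$ all of whose one-sided derivatives vanish at both endpoints, and that its seminorms are of the form $\|f\|_{k,j}=\sup_{t\in[0,1]}\|f^{(j)}(t)\|_k$ where $\{\|\cdot\|_k\}_{k\geq 0}$ are the seminorms of $\ore$ and $f^{(j)}$ is the $j$-th $t$-derivative. Suppose $\ore$ has the metric approximation property witnessed by finite-rank bounded linear maps $\{\theta_n\}_{n\geq 1}$ on $\ore$. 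First I would define $\widetilde{\theta}_n : S^\infty\ore\to S^\infty\ore$ by $(\widetilde{\theta}_n f)(t)=\theta_n(f(t))$, i.e.\ apply $\theta_n$ pointwise in $t$.

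The key steps are then: (1) check that $\widetilde{\theta}_n f$ again lies in $S^\infty\ore$ — this is where one uses that $\theta_n$ is linear and continuous, so it commutes with the difference quotients defining $\frac{d}{dt}$ and with the endpoint limits, giving $(\widetilde{\theta}_n f)^{(j)}(t)=\theta_n(f^{(j)}(t))$ and hence all one-sided derivatives at $0,1$ remain $0$; (2) check that each $\widetilde{\theta}_n$ has finite rank — if $\theta_n$ has range spanned by $a_1,\dots,a_{r}\in\ore$, then $\widetilde{\theta}_n$ has range inside $C^\infty([0,1])\otimes\mathrm{span}\{a_1,\dots,a_r\}$, which is \emph{not} finite-dimensional, so here I would instead compose with the truncation maps $\theta^{(q)}_m$-style device used for $\ct$: actually apply a Fej\'er-type or polynomial truncation in the $t$-variable as well, forming $\Theta_n = (\text{truncate to degree }n\text{ in }t)\circ\widetilde{\theta}_n$, whose range is genuinely finite-dimensional; (3) verify $\|\Theta_n f - f\|_{k,j}\to 0$ for every $k,j$, splitting as $\|\Theta_n f-\widetilde{\theta}_n f\|_{k,j}+\|\widetilde{\theta}_n f-f\|_{k,j}$, bounding the second term by $\sup_{t}\|\theta_n(f^{(j)}(t))-f^{(j)}(t)\|_k$ and using a compactness/equicontinuity argument in $t$ together with property (2) of the $\theta_n$'s to pass from pointwise to uniform convergence, and treating the first term exactly as in the $\ct$ computation in Example \ref{tori}.

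The main obstacle I anticipate is exactly the finite-rank requirement in step (2): pointwise application of a finite-rank map on $\ore$ does not produce a finite-rank map on the function algebra, so one genuinely needs the second, $t$-direction truncation, and then the convergence in step (3) must control \emph{both} truncations simultaneously. The delicate point is the interchange of ``$\sup_t$'' with ``$\lim_n$'': to upgrade $\|\theta_n(f^{(j)}(t))-f^{(j)}(t)\|_k\to 0$ (which holds for each fixed $t$) to uniform-in-$t$ convergence, I would use that $\{f^{(j)}(t):t\in[0,1]\}$ is a compact subset of $\ore$ (continuous image of a compact set) and that a sequence of equicontinuous bounded linear maps converging pointwise on a Fr\'echet space converges uniformly on compact sets — the equicontinuity being supplied by the uniform boundedness of $\{\theta_n\}$ via the Banach--Steinhaus theorem for Fr\'echet spaces. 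Once uniform convergence of $\widetilde{\theta}_n f\to f$ in every seminorm is in hand, and the $t$-truncation is chosen fine enough (say degree $n$) that the second truncation error also tends to $0$ uniformly, the corollary follows. The remaining verifications that $\Theta_n$ is well-defined into $S^\infty\ore$, linear, and bounded are routine and parallel to the computations already carried out for $\ct$ in Example \ref{tori}.
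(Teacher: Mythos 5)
Your overall architecture --- apply the finite-rank maps $\theta_n$ of $\ore$ pointwise in $t$, compose with a second finite-rank truncation in the $t$-direction, and upgrade pointwise to uniform convergence via Banach--Steinhaus equicontinuity together with compactness of $\{f^{(j)}(t) : t\in[0,1]\}$ --- is sound, and it is essentially the explicit two-variable version of what the paper does: the paper reduces the corollary to the metric approximation property of the scalar algebra $C_0^\infty(0,1)$ and then invokes ``the similar argument for $\ct$'' to combine the two directions. You are in fact more careful than the paper in pointing out that a truncation in the $t$-direction is indispensable because $\theta_n$ applied pointwise is never finite rank.

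The genuine gap is in your step (2), and it is not the routine verification you claim: a Fej\'er-type or polynomial truncation in $t$ does not map $S^\infty\ore$ into itself. Elements of the suspension must have \emph{all} one-sided derivatives vanishing at $t=0$ and $t=1$, and a nonzero algebraic or trigonometric polynomial can never vanish to infinite order at a point, so your $\Theta_n$ takes values outside $S^\infty\ore$; since the metric approximation property requires bounded finite-rank maps \emph{on} $S^\infty\ore$, these maps cannot witness it. This is precisely why the paper does not use a Fourier-type device here: it builds its finite-rank maps on $C_0^\infty(0,1)$ as projections onto the span of the Schmidt orthogonalization of the bump functions $f_j(t)=e^{-1/(jt(1-t))}$, which already lie in $C_0^\infty(0,1)$, so the endpoint conditions on the range are automatic. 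Your argument can be repaired by replacing the Fej\'er/polynomial truncation with any family of finite-rank maps on $C_0^\infty(0,1)$ witnessing its metric approximation property (for instance the paper's), but then you still owe the nontrivial check that those maps are bounded and converge pointwise with respect to every seminorm of the form $\sup_{t}\|g^{(j)}(t)\|_k$, i.e.\ in the smooth topology rather than in an $L^2$ sense; for $\ct$ this step was carried by the rapid decay of Fourier coefficients, and it must be re-established for whatever endpoint-flat system you substitute.
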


\begin{proof}
It suffices to show that the $F^*$-algebra
\[
C_0^\infty(0, 1)=\{f\in C^\infty(0, 1) \, | \, f_+^{(n)}(0)=f_-^{(n)}(1)=0 \, (n\in\Z_{\geq 0})\}
\]
has the metric approximation property. For any integer $j\geq 1$, we put
\[
f_j(t)=e^{-\frac{1}{jt(1-t)}}\in C_0^\infty(0, 1).
\]
Let $\{\xi_j\}_{j=1}^\infty$ be the orthogonal family of $C_0^\infty(0, 1)$ obtained by Schmidt orthogonalization of $\{f_j\}$. Then we define the linear maps $\theta_n : C_0^\infty(0, 1) \to C^\infty(0, 1)$ by
\[
\theta_n(f)(t)=\sum_{j=1}^n\langle f | \xi_j\rangle \xi_j(t) \quad (\xi\in C_0^\infty(0, 1), \, t\in (0, 1), \, n\geq 1).
\]
It is easily seen that the images of $\theta_n$ are included in $C_0^\infty(0, 1)$. By the similar argument for $\ct$, we obtain the conclusion.
\end{proof}

For a $F^*$-algebra $\ore$, by $\ore^*$ we denote the set of all bounded linear functionals on $\ore$, where we say a linear functional $\varphi$ on $\ore$ is bounded if and only if
\[
\|\varphi\|=\sup_{a\in\ore\setminus \{0\}}\frac{|\varphi(a)|}{d_{\ore}(a, 0)}<\infty.
\]
Before we proceed to show the lifting problem, we need the following lemma:
\begin{lemma}\label{fact}
Let $\ore, \mathfrak{B}$ be two $F^*$-algebras. Suppose that $\ide$ is an $F^*$-ideal of $\mathfrak{B}$ and that $L, N$ are finite dimensional subspaces of $\ore$ with $L\subset N$. We consider the following diagram of bounded linear maps{\rm :}
\[
\begin{CD}
L @>\iota >\subset > N @>\Psi >> \mathfrak{B} \\
 & &  @|  @VV q V \\
L @>\iota >\subset > N @>>\varphi > \mathfrak{B}/\ide,
\end{CD}
\]
where $q$ and $\iota$ are the quotient map and the natural inclusion respectively, and suppose that
\[
d_{\mathfrak{B}}(q\circ\Psi(a)-\varphi(a), 0)\leq \varepsilon d_\ore(a, 0) \quad (a\in L).
\]
for a positive constant $\e >0$. Then there is a bounded linear map $\varphi' : N \to \mathfrak{B}/\ide$ with the property that
\[
\begin{cases}
\varphi=q\circ\varphi' \\
d_{\mathfrak{B}}(\varphi'(a), 0)\leq d_\ore(a, 0) & (a\in N) \\
d_{\mathfrak{B}}(\varphi'(a)-\Psi(a), 0)\leq 6\varepsilon & (a\in L).
\end{cases}
\]

\begin{proof}
This lemma is an analogy of Lemma 2.5 in \cite{choi}. Let $D'$ and $K$ be the closed unit ball of $L\hat{\otimes}_\gamma\mathfrak{B}$ and $N\hat{\otimes}_\gamma\mathfrak{B}$ respectively, that is,
\begin{align*}
D'&=\{\varphi : \mathfrak{B}\to L \, | \, \|\varphi\|_{L\hat{\otimes}_\gamma\mathfrak{B}^*}\leq 1\}\subset L\hat{\otimes}_\gamma \mathfrak{B}^* \\
K&=\{\varphi : \mathfrak{B}\to N \, | \, \|\varphi\|_{N\hat{\otimes}_\gamma\mathfrak{B}^*}\leq 1\}\subset N\hat{\otimes}_\gamma\mathfrak{B}^*,
\end{align*}
where
\[
\|\varphi\|_{L\hat{\otimes}_\gamma\mathfrak{B}^*}=
\sup_{a\in\mathfrak{B}\setminus\{0\}}\frac{d_\ore(\varphi(a), 0)}{d_{\mathfrak{B}}(a, 0)}
\]
and $\|\cdot\|_{N\hat{\otimes}_\gamma\mathfrak{B}^*}$ is defined by the similar way for $\|\cdot\|_{L\hat{\otimes}_\gamma\mathfrak{B}^*}$, and ${\rm{Aff}}_T(D')$ the set of all affine functions $\psi$ on $D'$ such that $\psi(\alpha \varphi)=\alpha\psi(\varphi)$ for all $\alpha\in T, \varphi\in D'$. It is clear that $\ide$ is a $M$-ideal of $\mathfrak{B}$ and the equality
\[
\mathfrak{B}^*=\ide^\perp\oplus\ide^*
\]
holds as a linear space, where $\ide^\perp$ is the annihilator of $\ide$. Let $e : \mathfrak{B}^*\to\ide^\perp$ be the natural projection and $W$ the image of $N\hat{\otimes}_\gamma\mathfrak{B}^*$ via $1\otimes e$, which is equal to $N\hat{\otimes}_\gamma \ide^\perp$. Then $D'$ is mapped weak$^*$ homeomorphically to $D\subset K$ through the natural embedding $\iota\otimes 1 : L\hat{\otimes}_\gamma\mathfrak{B}^*\to N\hat{\otimes}_\gamma\mathfrak{B}^*$. We may identify the closed unit ball of $N\hat{\otimes}_\gamma\mathfrak{B}^*$ with $F=K\cap W$. We also identify the closed unit ball of $L\hat{\otimes}_\gamma\mathfrak{B}^*$ with $D'\cup W'$, where $W'=(1\otimes e)(L\hat{\otimes}_\gamma\mathfrak{B}^*)=L\hat{\otimes}_\gamma\ide^\perp$. It is verified by the same argument in the proof of Lemma 2.5 in \cite{choi} that
\begin{equation}\label{identify}
(\iota\otimes 1)(1\otimes e)=(1\otimes e)(\iota\otimes 1)
\end{equation}
and
\begin{equation}\label{identify:2}
(\iota\otimes 1)(D'\cap W')=D\cap(\iota\otimes 1)(W')=D\cap W=D\cap F.
\end{equation}
Thus we have the following diagram of restrictions
\begin{equation}\label{restrictions}
\begin{CD}
{\rm{Aff}}_T(D\cap F) @<<< {\rm{Aff}}_T(F) \\
@AAA  @AAA \\
{\rm{Aff}}_T(D) @<<< {\rm{Aff}}_T(K). 
\end{CD}
\end{equation}
Since $1\otimes e : L\hat{\otimes}_\gamma\mathfrak{B}^*\to L\hat{\otimes}_\gamma\ide^\perp$ maps $D'$ onto $D'\cap W$, $D$ is mapped onto $D\cap F$ by (\ref{identify}) and (\ref{identify:2}) and $D$ satisfies the condition of Lemma 2.1 in \cite{choi}. Therefore, with the diagram (\ref{restrictions}), we obtain the conclusion by the same argument of Lemma 2.5 in \cite{choi}.
\end{proof}

\end{lemma}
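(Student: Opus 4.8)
\textbf{Proof proposal for Lemma \ref{fact}.} The plan is to adapt, essentially verbatim, the proof of Lemma 2.5 in \cite{choi}, replacing the Banach-space norms by the $F^*$-metrics $d_\ore, d_{\mathfrak{B}}$ and the attendant seminorm bookkeeping. The conceptual backbone is the same: one encodes the bounded linear map $\Psi : N \to \mathfrak{B}$ as an element of $\mathrm{Aff}_T(K)$ (via the duality between bounded linear maps $N \to \mathfrak{B}$ and affine functions on the unit ball of $N \hat{\otimes}_\gamma \mathfrak{B}^*$), encodes $\varphi : N \to \mathfrak{B}/\ide$ similarly as an element of $\mathrm{Aff}_T(F)$, and then produces the lift $\varphi'$ by a simultaneous extension/approximation argument driven by the restriction diagram (\ref{restrictions}) and the key fact that $\ide$ is an $M$-ideal, so that $\mathfrak{B}^* = \ide^\perp \oplus \ide^*$ splits isometrically and $D$ maps onto $D \cap F$.

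First I would pin down the dualities carefully in the $F^*$ setting: for a finite-dimensional subspace $N \subset \ore$, bounded linear maps $N \to \mathfrak{B}$ correspond to elements of $N \hat{\otimes}_\gamma \mathfrak{B}^*$ (finite-dimensionality of $N$ makes this tensor product and its unit ball behave well, and makes the weak$^*$ topology on the unit ball of $N \hat{\otimes}_\gamma \mathfrak{B}^*$ compact), and affine $T$-homogeneous functions on that unit ball recover the maps. Here I would lean on the already-established identities (\ref{identify}) and (\ref{identify:2}) and on the structural facts recorded in the statement's own proof sketch: $\ide$ is an $M$-ideal of $\mathfrak{B}$, the splitting $\mathfrak{B}^* = \ide^\perp \oplus \ide^*$, the projection $e$, and the identifications of $D, D', F, W, W'$. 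Next I would invoke Lemma 2.1 of \cite{choi} (whose hypotheses, as noted, are met by $D \subset K$ with $D$ mapped onto $D \cap F$) to lift the affine function representing $\varphi$ along the top row of (\ref{restrictions}) to an element of $\mathrm{Aff}_T(K)$ that additionally stays close to the affine function representing $\Psi$ on the sub-ball $D$; translating back through the duality yields the bounded linear map $\varphi' : N \to \mathfrak{B}/\ide$. The three conclusions then fall out: $q \circ \varphi' = \varphi$ from the lifting along the vertical $\ide^\perp$-projection, the contractivity bound $d_{\mathfrak{B}}(\varphi'(a),0) \le d_\ore(a,0)$ from $\varphi'$ landing in the closed unit ball, and the estimate $d_{\mathfrak{B}}(\varphi'(a) - \Psi(a), 0) \le 6\varepsilon$ on $L$ from the quantitative closeness in Lemma 2.1 together with the hypothesis $d_{\mathfrak{B}}(q\circ\Psi(a) - \varphi(a), 0) \le \varepsilon\, d_\ore(a,0)$ on $L$ (the constant $6$ being exactly the one produced by the Choi--Effros--Ruan-type argument).

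The main obstacle, and the only place where the $F^*$-setting genuinely differs from the Banach-space case, is verifying that the metric $d_{\mathfrak{B}}$ — which is not homogeneous — interacts correctly with the convexity and affineness machinery: the unit ball $K$ of $N \hat{\otimes}_\gamma \mathfrak{B}^*$ is defined via the gauge $\|\varphi\|_{N\hat{\otimes}_\gamma\mathfrak{B}^*} = \sup_{a \neq 0} d_\ore(\varphi(a),0)/d_{\mathfrak{B}}(a,0)$, and one must check that this gauge is genuinely a norm (subadditive and absolutely homogeneous) so that $K$ is convex and weak$^*$-compact and the whole apparatus of affine functions applies. This is where the equivalence of the metric topology with the topology given by the countable family of seminorms $\{\|\cdot\|_k\}$ does the work: although $d_{\mathfrak{B}}$ itself is not a norm, boundedness of a linear map is equivalent to boundedness with respect to each seminorm up to the fixed weighting $2^{-k}/(1+\cdot)$, and one can either rephrase the whole argument seminorm-by-seminorm or check directly that the displayed gauge is a legitimate norm making $N \hat{\otimes}_\gamma \mathfrak{B}^*$ a Banach space. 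Once that point is settled the remaining steps are a faithful transcription of \cite{choi}, as the statement already indicates, and I would present them as such rather than reproduce the affine-selection details.
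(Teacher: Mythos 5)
Your proposal follows essentially the same route as the paper's own proof: both encode the maps $\Psi$ and $\varphi$ as affine $T$-homogeneous functions on the unit balls of $L\hat{\otimes}_\gamma\mathfrak{B}^*$ and $N\hat{\otimes}_\gamma\mathfrak{B}^*$, use the $M$-ideal splitting $\mathfrak{B}^*=\ide^\perp\oplus\ide^*$ and the resulting restriction diagram, and then defer the lifting-with-estimates step to Lemma 2.1 and the argument of Lemma 2.5 of Choi--Effros. The one issue you flag that the paper does not --- whether the gauge built from the non-homogeneous metric $d_{\mathfrak{B}}$ is actually a norm, so that the ``unit balls'' are convex and weak$^*$-compact and the affine-function machinery applies --- is a genuine subtlety of the $F^*$-setting, and your treatment of it is at least as careful as the paper's, which passes over it in silence.
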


\begin{proposition}\label{lifting}
Let $\ore, \mathfrak{B}$ be two $F^*$-algebras and $\mathfrak{I}\subset\mathfrak{B}$ an $F^*$-ideal. If $\ore$ is separable and has the metric approximation property, then for any bounded linear map $\varphi : \ore \to \mathfrak{B}/\mathfrak{I}$, there exists a bounded linear map $\Phi : \ore \to \mathfrak{B}$ with the property that $q\circ\Phi=\varphi$, where $q : \mathfrak{B}\to\mathfrak{B}/\mathfrak{I}$ is the quotient map.
\end{proposition}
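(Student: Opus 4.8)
The plan is to mimic the Banach-space argument of Choi--Effros-type lifting (as in \cite{choi}), carried over to the Fr\'echet $^*$-algebra setting using Lemma \ref{fact} as the finite-dimensional extension tool. First I would exploit separability and the metric approximation property of $\ore$: choose a countable dense set, and using the finite-rank bounded maps $\{\theta_n\}$ from the definition of the metric approximation property, build an increasing chain of finite-dimensional subspaces $L_1\subset L_2\subset\cdots\subset\ore$ with $\bigcup_n L_n$ dense in $\ore$ and with the property that each $\theta_n$ (or a suitable modification) approximately maps $\ore$ into $L_n$. The idea is that $\theta_n(a)\to a$ for every $a$, so replacing $\varphi$ by $\varphi\circ\theta_n$ on the pieces gives us approximate liftings on finite-dimensional subspaces that we can then glue.

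The core of the construction is an inductive procedure producing bounded linear maps $\Psi_n : L_n \to \mathfrak{B}$ together with a decreasing sequence of tolerances $\varepsilon_n\to 0$ (say $\varepsilon_n = 2^{-n}$) such that $q\circ\Psi_n$ is close to $\varphi|_{L_n}$ in the sense $d_{\mathfrak{B}}(q\circ\Psi_n(a)-\varphi(a),0)\leq\varepsilon_n d_\ore(a,0)$, each $\Psi_n$ is contractive (i.e.\ $d_{\mathfrak{B}}(\Psi_n(a),0)\leq d_\ore(a,0)$), and consecutive maps agree approximately on the smaller subspace: $d_{\mathfrak{B}}(\Psi_{n+1}(a)-\Psi_n(a),0)\leq 6\varepsilon_n$ for $a\in L_n$. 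The passage from $\Psi_n$ to $\Psi_{n+1}$ is exactly where Lemma \ref{fact} enters, applied with $L=L_n$, $N=L_{n+1}$: it first produces $\varphi' : L_{n+1}\to\mathfrak{B}/\ide$ lifting the restriction of $\varphi$, contractive, and $6\varepsilon_n$-close to $q\circ\Psi_n$ on $L_n$; then, since $L_{n+1}$ is finite-dimensional and the quotient map $q$ is a metric surjection (the ball of $\mathfrak{B}/\ide$ is the image of the ball of $\mathfrak{B}$ up to $\varepsilon$, using that $\ide$ is an $M$-ideal), one lifts $\varphi'$ to a bounded linear $\Psi_{n+1} : L_{n+1}\to\mathfrak{B}$ with the required estimates. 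For $a\in L_n$ fixed, the sequence $\{\Psi_m(a)\}_{m\geq n}$ is then Cauchy in $\mathfrak{B}$ because $\sum_m 6\varepsilon_m<\infty$, so it converges; define $\Phi(a)$ to be this limit, getting a bounded linear map on $\bigcup_n L_n$ which is contractive, hence uniformly continuous, hence extends to all of $\ore$ by density. Finally $q\circ\Phi=\varphi$ on the dense subspace because $d_{\mathfrak{B}}(q\circ\Psi_n(a)-\varphi(a),0)\leq\varepsilon_n d_\ore(a,0)\to 0$, and by continuity on all of $\ore$.

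The main obstacle I expect is two-fold. First, one must be careful that the metric $d_\ore$ (and $d_{\mathfrak{B}}$) behaves well enough under the operations used: it is translation-like but not homogeneous, so estimates of the form $d(\lambda a,0)$ versus $|\lambda|d(a,0)$ fail, and the entire scaling apparatus of the Banach-space proof has to be rephrased in terms of the seminorms $\{\|\cdot\|_k\}$ (where homogeneity does hold) and only afterwards translated back to $d_\ore$; this is why Lemma \ref{fact} is stated with $d$'s on some lines and requires its own careful proof. Second, and more seriously, one needs that $\ide$ is an $M$-ideal in the Fr\'echet sense so that the quotient map $q:\mathfrak{B}\to\mathfrak{B}/\ide$ admits approximate contractive lifts on finite-dimensional subspaces and so that the decomposition $\mathfrak{B}^* = \ide^\perp\oplus\ide^*$ used inside Lemma \ref{fact} is valid; verifying this for the Fr\'echet $^*$-algebras actually arising (the Toeplitz extensions and crossed products of earlier sections) is the delicate point, and one should either cite or establish that short exact sequences of $F^*$-algebras of the relevant type have this $M$-ideal property. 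Once these structural facts are in hand, the inductive gluing is routine and the argument closes.
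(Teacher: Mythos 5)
Your proposal follows essentially the same route as the paper: use separability and the metric approximation property to build an increasing chain of finite-dimensional subspaces $L_n$ with dense union together with finite-rank maps $\theta_n$ landing in them, apply Lemma \ref{fact} inductively (with $\Psi_n\circ\theta_n$ as the approximate lift on the larger subspace) to produce compatible lifts $\Psi_n:L_n\to\mathfrak{B}$ whose successive differences are summable, and pass to the limit and extend by density. The only cosmetic difference is that the paper's inductive invariant is the exact identity $q\circ\Psi_n=\varphi$ on $L_n$ rather than your $\varepsilon_n$-approximate version, and your closing remarks about the non-homogeneity of $d_{\ore}$ and the $M$-ideal hypothesis correctly identify the points the paper treats most lightly.
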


\begin{proof}
This proof is inspired by that of Theorem 2.6 in \cite{choi}. We fix a sequence $\{a_n\}_{n\in\mathbb{N}}\subset\ore$ dense in $\ore$. We construct recursively the pairs $\{(L_n, \theta_n)\}_{n\in\Z_{\geq 0}}$ which consist of increasing finite dimensional subspaces $L_n\subset\ore$ with $a_n\in L_n$ for any $n\in\Z_{\geq 0}$ and bounded linear maps $\theta_n : \ore \to L_n$ with the property that for any $a\in L_{n-1}$, the inequalities
\[ d_{\ore}(a, \theta_n(a))\leq \frac{1}{2^n} \]
are satisfied. We put $L_0=\{ 0\}$ and $\theta_0=0$. We suppose that for some $n\in\Z_{\geq 0}$ the pairs $(L_0, \theta_0), \cdots , (L_n, \theta_n)$ with the above properties are given. By the approximation property of $\ore$, there exists a bounded linear map $\theta_{n+1} : \ore\to\ore$ such that for each $a\in L_{n}$, the inequality
\[ d_{\ore}(a, \theta_{n+1}(a))\leq \frac{1}{2^{n+1}} \]
holds. We define the subspace $L_{n+1}$ of $\ore$ by
\[ L_{n+1}=L_n+\theta_{n+1}(\ore)+\C a_{n+1}. \]
Then we have the desired pairs $\{(L_n, \theta_n)\}_{n\in\Z_{\geq 0}}$. We note that $\cup_{n\in\Z_{\geq 0}}L_n$ is dense in $\ore$ with respect to Fr\'echet topology.

Next we inductively define a family of bounded linear maps 
\[ \Psi_n : L_n \to \mathfrak{B} \quad (n\in\Z_{\geq 0}) \]
such that
\begin{eqnarray}
q\circ\Psi_n(a)=\varphi(a) \quad (a\in L_n).\label{eq:1}
\end{eqnarray}
Putting $\Psi_0=0$, suppose that for some $n\in\Z_{\geq 0}$, bounded linear maps $\Psi_0, \cdots , \Psi_n$ satisfying (\ref{eq:1}) are constructed. Then we have that for any $a\in L_{n-1}$,
\begin{align*}
d_{\mathfrak{B}/\mathfrak{I}}(q\circ\Psi_n\circ\theta_n(a), \varphi(a)) &=d_{\mathfrak{B}/\mathfrak{I}}(\varphi\circ\theta_n(a), \varphi(a)) \\
&\leq Cd_{\ore}(\theta_n(a), a) \\
&\leq \frac{C}{2^n}.
\end{align*}
By Lemma \ref{fact}, we find a bounded map $\Psi_{n+1} : L_{n+1}\to\mathfrak{B}$ such that $\varphi=q\circ \Psi_{n+1}$ on $L_{n+1}$ and that for any $a\in L_{n-1}$ with
\[ d_{\mathfrak{B}}(\Psi_{n+1}(a), \Psi_n\circ\theta_n(a))\leq \frac{6C}{\,\, 2^n}. \]
Therefore, we compute that
\begin{align*}
d_{\mathfrak{B}}(\Psi_{n+1}(a), \Psi_n(a)) &\leq \frac{6C}{\,\, 2^n}+d_{\mathfrak{B}}(\Psi_n(a), \Psi_n\circ\theta_n(a)) \\
&\leq \frac{6C}{\,\, 2^n}+d_{\ore}(a, \theta_n(a)) \\
&\leq \frac{6C+1}{\,\, 2^n} \quad (a\in L_{n-1}).
\end{align*}
Hence for a fixed integer $n_0\in\Z_{\geq 0}$, we have for all $n\geq n_0$,
\[
d_{\mathfrak{B}}(\Psi_{n+1}(a), \Psi_n(a))\leq \dfrac{6C+1}{\,\, 2^n}.
\]
Thus, for a fixed integer $n_0\in\Z_{\geq 0}$, the family of bounded linear maps $\{\Psi_n\}$ converges to some $\Psi^{(n_0)} : L_{n_0-1} \to \mathfrak{B}$. Therefore, we have the bounded linear map
\[ \Psi : \bigcup_{n\in\Z_{\geq 0}}L_n \to \mathfrak{B} \]
such that $\Psi |_{L_n}=\Psi^{(n)} \, (n\in\Z_{\geq 0})$, and we can extend it to that on the closure of $\cup_{n\in\Z_{\geq 0}}L_n$ which is equal to $\ore$. This completes the proof.
\end{proof}


\section{Mayer-Vietoris Exact Sequence}

This section is devoted to proving Mayer-Vietoris exact sequence for the entire cyclic cohomology. We firstly give a short proof of Bott periodicity for the entire cyclic cohomology by using the following Meyer's excision for the entire cyclic theory \cite{meyer}:

\begin{proposition}\label{meyer}
Let
\[
\begin{CD}
0 @>>> K @>i>> P @>q>> Q @>>> 0
\end{CD}
\]
be a short exact sequence of $F^*$-algebras with a bounded linear section $s$ of $q$. Then the following 6-terms exact sequence:
\[
\begin{CD}
HE^{{\rm{ev}}}(Q) @>q^*>> HE^{{\rm{ev}}}(P) @>i^*>> HE^{{\rm{ev}}}(K) \\
@AAA & &  @VVV \\
HE^{{\rm{od}}}(K) @<<i^*< HE^{{\rm{od}}}(P) @<<q^*< HE^{{\rm{od}}}(Q)
\end{CD}
\]
holds.
\end{proposition}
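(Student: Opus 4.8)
This is Meyer's excision theorem for entire cyclic cohomology \cite{meyer}, and the plan is to run the Cuntz--Quillen--Meyer argument inside the category of $F^*$-algebras. The preliminary step is to recall Meyer's reformulation of $HE^*$: for an $F^*$-algebra $A$, endowed with the bornology underlying its Fr\'echet topology, one forms the analytic tensor algebra $\mathcal{T}A$, a suitable completion of the non-unital algebraic tensor algebra $\bigoplus_{n\geq 1}A^{\otimes n}$, and there is a natural isomorphism $HE^*(A)\cong H^*\big(\operatorname{Hom}(X(\mathcal{T}A),\C)\big)$, where $X(-)$ is the Cuntz--Quillen $X$-complex and $\operatorname{Hom}(-,\C)$ denotes bounded linear functionals. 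Here ``bounded'' carries exactly the meaning fixed in Section~4, so that this reformulation takes place entirely within $F^*$-algebras.

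First I would apply $\mathcal{T}$ to the given extension. Because $q$ has the bounded linear section $s$, the algebra homomorphism $\mathcal{T}q\colon\mathcal{T}P\to\mathcal{T}Q$ is surjective and admits a bounded linear splitting built levelwise out of $s$; it is precisely here that the hypothesis on $s$ is used. Passing to $X$-complexes then produces a short exact sequence of $\Z/2$-graded complexes of $F^*$-spaces
\[
0\longrightarrow \mathcal{L}\longrightarrow X(\mathcal{T}P)\longrightarrow X(\mathcal{T}Q)\longrightarrow 0,
\]
where $\mathcal{L}=\ker\big(X(\mathcal{T}P)\to X(\mathcal{T}Q)\big)$ and the sequence is again linearly split. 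Since it is bornologically split it stays exact after applying $\operatorname{Hom}(-,\C)$, and the associated long exact cohomology sequence --- automatically $6$-periodic because the complexes are $\Z/2$-graded --- is a hexagon with vertices $HE^{\mathrm{ev}}(Q),HE^{\mathrm{ev}}(P),H^{\mathrm{ev}}\operatorname{Hom}(\mathcal{L},\C)$ and their odd counterparts.

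The heart of the matter, and the step I expect to be the main obstacle, is the \emph{excision isomorphism} $H^*\operatorname{Hom}(\mathcal{L},\C)\cong HE^*(K)$. As always in excision arguments, the kernel $\mathcal{L}$ is the \emph{relative} $X$-complex and is not literally $X(\mathcal{T}K)$; one must show that the canonical map $X(\mathcal{T}K)\to\mathcal{L}$, which factors through $X$ of the ideal $\ker\mathcal{T}q$ (an analytically nilpotent extension of $\mathcal{T}K$), is a chain homotopy equivalence. This is where the Cuntz--Quillen pro-nilpotent machinery enters: one constructs an explicit contracting homotopy --- the deformation retraction onto $\mathcal{T}K$ together with the generalized trace (Goodwillie-type) maps --- and the delicate point in the $F^*$-setting is to verify that every constituent map is bounded for the analytic bornology, with the estimates once more controlled by the section $s$, so that Meyer's argument goes through verbatim. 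Granting this, applying $\operatorname{Hom}(-,\C)$ and passing to cohomology turns $H^*\operatorname{Hom}(\mathcal{L},\C)$ into $HE^*(K)$, and substituting into the hexagon above yields the asserted six-term exact sequence, the connecting homomorphisms $HE^{\mathrm{ev}}(K)\to HE^{\mathrm{od}}(Q)$ and $HE^{\mathrm{od}}(K)\to HE^{\mathrm{ev}}(Q)$ being those of the snake lemma.
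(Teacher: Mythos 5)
The paper does not prove this proposition at all: it is quoted verbatim as Meyer's excision theorem and justified only by the citation \cite{meyer} (see also \cite{meyer2}). So there is nothing in the paper to compare your argument against line by line; what you have written is a reconstruction of Meyer's own proof, and as such it is faithful in outline. You correctly identify the bornological reformulation $HE^*(A)\cong H^*\operatorname{Hom}(X(\mathcal{T}A),\C)$, the role of the bounded linear section $s$ in splitting the induced sequence of $X$-complexes (so that $\operatorname{Hom}(-,\C)$ preserves exactness and the snake lemma applies), and the fact that the entire content of the theorem is the excision isomorphism identifying the relative complex $\mathcal{L}$ with $X(\mathcal{T}K)$ up to homotopy equivalence.

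That said, as a self-contained proof your proposal has a genuine gap exactly where you flag it: the homotopy equivalence $X(\mathcal{T}K)\simeq\mathcal{L}$ is asserted (``Granting this\dots'') rather than proved, and this is not a routine verification --- it is the theorem. Meyer's argument in \cite{meyer2} does not proceed by a direct contracting homotopy on $\mathcal{L}$ in the naive sense; it establishes that $L=\ker(\mathcal{T}q)$ is analytically quasi-free and compares its $X$-complex with that of $\mathcal{T}K$ through the analytically nilpotent extension $0\to \ker(L\to K)\to L\to K\to 0$, with boundedness of all the comparison maps controlled by the analytic bornology. If your intent is to cite Meyer for this step, your proof collapses to the same citation the paper makes and is fine; if the intent is to supply the proof, the excision step must actually be carried out, including the verification (which you correctly anticipate) that the Goodwillie-type maps are bounded for the bornologies attached to the given Fr\'echet topologies. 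One further small point worth making explicit: Meyer's theorem is stated for complete bornological algebras, so one should record which bornology on an $F^*$-algebra (the bounded or the precompact one) is being used and that it is compatible with the notion of ``bounded linear map'' fixed in Section~4 of the paper.
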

This yields the following fact, which has been already shown by Brodzki and Plymen \cite{brodzki-plymen} using bivariant entire homology and cohomology theory:
\begin{lemma}[Bott periodicity for entire cyclic cohomology]\label{bott}
For a $F^*$-algebra $\ore$,
\[ HE^{{\rm{ev}}}(S^\infty\ore)\simeq HE^{{\rm{od}}}(\ore), \quad HE^{{\rm{od}}}(S^\infty\ore)\simeq HE^{{\rm{ev}}}(\ore). \]
\end{lemma}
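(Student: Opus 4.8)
The plan is to derive the Bott periodicity isomorphisms directly from the short exact sequence
\[
\begin{CD}
0 @>>> \mathfrak{I} @>i>> C^\infty\mathfrak{A} @>q>> \mathfrak{A} @>>> 0
\end{CD}
\]
introduced in Section 2, which comes equipped with the explicit bounded linear section $s(a)(t)=(1-t)a$ of $q$. Since this sequence has a bounded linear section, Proposition \ref{meyer} applies and yields the six-term exact sequence in entire cyclic cohomology associated to $\mathfrak{I}$, $C^\infty\mathfrak{A}$, and $\mathfrak{A}$.

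The next step is to feed into that six-term sequence the two computations from Lemma \ref{iso_for_suspension}, namely $HE^*(C^\infty\mathfrak{A})=0$ and $HE^*(\mathfrak{I})\simeq HE^*(S^\infty\mathfrak{A})$. Substituting $HE^{\mathrm{ev}}(C^\infty\mathfrak{A})=HE^{\mathrm{od}}(C^\infty\mathfrak{A})=0$ into the hexagon collapses it into two short exact pieces: the connecting map $HE^{\mathrm{od}}(\mathfrak{I})\to HE^{\mathrm{ev}}(\mathfrak{A})$ becomes an isomorphism (its kernel maps into $HE^{\mathrm{ev}}(C^\infty\mathfrak{A})=0$ via $i^*$ and its cokernel receives a surjection from $HE^{\mathrm{ev}}(C^\infty\mathfrak{A})=0$ through $q^*$), and similarly the connecting map $HE^{\mathrm{ev}}(\mathfrak{I})\to HE^{\mathrm{od}}(\mathfrak{A})$ is an isomorphism. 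Combining these with the identification $HE^*(\mathfrak{I})\simeq HE^*(S^\infty\mathfrak{A})$ from Lemma \ref{iso_for_suspension} gives exactly
\[
HE^{\mathrm{ev}}(S^\infty\mathfrak{A})\simeq HE^{\mathrm{od}}(\mathfrak{A}),\qquad HE^{\mathrm{od}}(S^\infty\mathfrak{A})\simeq HE^{\mathrm{ev}}(\mathfrak{A}).
\]

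I do not expect any serious obstacle here: the entire content has effectively been front-loaded into Lemma \ref{iso_for_suspension} (the homotopy equivalences $C^\infty\mathfrak{A}\simeq 0$ and $\mathfrak{I}\simeq S^\infty\mathfrak{A}$) and into Meyer's excision Proposition \ref{meyer}. The only point requiring a word of care is making sure the section $s$ is genuinely bounded in the Fréchet sense of Section 4 so that Proposition \ref{meyer} is applicable — but this is immediate since $s(a)(t)=(1-t)a$ is linear and each seminorm of $s(a)$ is controlled by the corresponding seminorm of $a$. One should also note that the isomorphisms obtained are natural enough for the later Mayer–Vietoris argument, which follows because the connecting maps in Proposition \ref{meyer} are natural with respect to morphisms of short exact sequences. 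So the proof is essentially a diagram chase: invoke Proposition \ref{meyer} on the cone sequence, kill the cone term using Lemma \ref{iso_for_suspension}, and read off the two isomorphisms after substituting $HE^*(\mathfrak{I})\simeq HE^*(S^\infty\mathfrak{A})$.
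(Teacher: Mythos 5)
Your proposal is correct and follows essentially the same route as the paper: apply Proposition \ref{meyer} to the cone sequence $0\to\mathfrak{I}\to C^\infty\mathfrak{A}\to\mathfrak{A}\to 0$ (with the section $s(a)(t)=(1-t)a$), then use $HE^*(C^\infty\mathfrak{A})=0$ and $HE^*(\mathfrak{I})\simeq HE^*(S^\infty\mathfrak{A})$ from Lemma \ref{iso_for_suspension} to collapse the six-term sequence. The extra remarks you include (boundedness of $s$, the explicit diagram chase) are just a more detailed writing-out of the paper's two-line argument.
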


\begin{proof}
By the exact sequence cited above, we have the following exact diagram:
\[
\begin{CD}
HE^{{\rm{ev}}}(\ore) @>>> HE^{{\rm{ev}}}(C^\infty\ore) @>>> HE^{{\rm{ev}}}(\mathfrak{I}) \\
@AAA & &  @VVV \\
HE^{{\rm{od}}}(\mathfrak{I}) @<<< HE^{{\rm{od}}}(C^\infty\ore) @<<< HE^{{\rm{od}}}(\ore).
\end{CD}
\]
By Lemma \ref{iso_for_suspension}, we deduce the conclusion.
\end{proof}

In what follows, we show an entire cyclic cohomology version of Mayer-Vietoris exact sequence. Before stating it, we review briefly the fibered product of $F^*$-algebras, which is an noncommutative analogue of the connected sum of two manifolds. Let $\mathfrak{A}_1, \mathfrak{A}_2$ and $\mathfrak{B}$ be $F^*$-algebras and $f_j : \mathfrak{A}_j \to \mathfrak{B} \quad (j=1, 2)$ epimorphisms.

\begin{definition}\label{fiber}
$\{(a_1, a_2)\in \mathfrak{A}_1\oplus\mathfrak{A}_2 \, | \, f_1(a_1)=f_2(a_2) \}$ is called the fibered product of $(\mathfrak{A}_1, \mathfrak{A}_2)$ along $(f_1, f_2)$ over $\mathfrak{B}$, which we denote by $\mathfrak{A}_1\underset{\mathfrak{B}}{\#}\mathfrak{A}_2$. Let $g_j$ be the projections of $\mathfrak{A}_1\underset{\mathfrak{B}}{\#}\mathfrak{A}_2$ onto $\mathfrak{A}_j \, (j=1, 2)$.
\end{definition}

\begin{theorem}[Mayer-Vietoris Exact Sequence for entire cyclic cohomology]
\label{m-v-seq}
In \\
the situation of Definition {\rm{\ref{fiber}}}, suppose that $\mathfrak{B}$ has the metric approximation property and separable. Then we have that the following exact diagram:
\[
\begin{CD}
HE^{{\rm{ev}}}(\ore_1 \underset{\mathfrak{B}}{\#} \ore_2) @>>> HE^{{\rm{od}}}(\mathfrak{B}) @>-f_1^*+f_2^*>> HE^{{\rm{od}}}(\ore_1)\oplus HE^{{\rm{od}}}(\ore_2) \\
@Ag_1^*+g_2^*AA & &  @VVg_1^*+g_2^*V \\
HE^{{\rm{ev}}}(\ore_1)\oplus HE^{{\rm{ev}}}(\ore_2) @<<-f_1^*+f_2^*< HE^{{\rm{ev}}}(\mathfrak{B}) @<<< HE^{{\rm{od}}}(\ore_1 \underset{\mathfrak{B}}{\#} \ore_2)
\end{CD}
\]
holds.
\end{theorem}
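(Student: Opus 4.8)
The plan is to realize the Mayer--Vietoris sequence as the six-term excision sequence of Proposition~\ref{meyer} applied to a suitable short exact sequence of $F^*$-algebras, and then to identify the three terms of that sequence with the three entries appearing in the statement. The natural candidate is
\[
\begin{CD}
0 @>>> \ker g_1 @>>> \ore_1 \underset{\mathfrak{B}}{\#} \ore_2 @>g_1>> \ore_1 @>>> 0,
\end{CD}
\]
together with the observation that $g_2$ restricts to an isomorphism $\ker g_1 \xrightarrow{\ \simeq\ } \ker f_1 \subset \ore_2$. First I would check that this is indeed a short exact sequence of $F^*$-algebras: $g_1$ is surjective because $f_2$ is surjective (given $a_1\in\ore_1$, pick any $a_2\in\ore_2$ with $f_2(a_2)=f_1(a_1)$, which exists since $f_2$ is an epimorphism), and $\ker g_1 = \{(0,a_2) : f_2(a_2)=0\}$, so projecting onto the second coordinate identifies it with the $F^*$-ideal $\ker f_2$ of $\ore_2$.

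\textbf{The bounded linear section.} The key hypothesis---that $\mathfrak{B}$ is separable and has the metric approximation property---is exactly what is needed to produce a bounded linear section of $g_1$. Indeed, by Proposition~\ref{lifting} applied to the $F^*$-ideal $\ker f_2 \subset \ore_2$ and the bounded linear map $f_1 : \ore_1 \to \mathfrak{B} = \ore_2/\ker f_2$, there is a bounded linear lift $\Phi : \ore_1 \to \ore_2$ with $f_2\circ\Phi = f_1$; then $a_1 \mapsto (a_1, \Phi(a_1))$ is a bounded linear section of $g_1$. This is the step where the hypotheses genuinely enter, and I expect it to be the main obstacle---or at least the most delicate point---since one must be careful that Proposition~\ref{lifting} is stated for a bounded linear (not necessarily multiplicative) lift, which is all that Meyer's excision requires, and that the separability and metric approximation property are inherited or directly available for $\mathfrak{B}$ as assumed. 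With the section in hand, Proposition~\ref{meyer} yields the six-term exact sequence
\[
\begin{CD}
HE^{{\rm ev}}(\ore_1) @>g_1^*>> HE^{{\rm ev}}(\ore_1\underset{\mathfrak{B}}{\#}\ore_2) @>>> HE^{{\rm ev}}(\ker f_2) \\
@AAA & & @VVV \\
HE^{{\rm od}}(\ker f_2) @<<< HE^{{\rm od}}(\ore_1\underset{\mathfrak{B}}{\#}\ore_2) @<g_1^*<< HE^{{\rm od}}(\ore_1).
\end{CD}
\]

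\textbf{Identifying the terms and splicing.} The remaining work is bookkeeping. I would run the same construction with the roles of $\ore_1$ and $\ore_2$ interchanged, giving a short exact sequence $0 \to \ker f_1 \to \ore_1\underset{\mathfrak{B}}{\#}\ore_2 \xrightarrow{g_2} \ore_2 \to 0$, and then combine the two six-term sequences. The cleanest route is to observe that $\ore_1\underset{\mathfrak{B}}{\#}\ore_2$ fits into
\[
\begin{CD}
0 @>>> \mathfrak{J} @>>> \ore_1\underset{\mathfrak{B}}{\#}\ore_2 @>(g_1,g_2)>> \ore_1\oplus\ore_2 @>-f_1+f_2>> \mathfrak{B} @>>> 0,
\end{CD}
\]
where $\mathfrak{J} = \ker g_1 \cap \ker g_2 = \{(0,0)\} = 0$ when $f_1,f_2$ are already injective on the relevant kernels---more to the point, the composite $(g_1,g_2)$ followed by $-f_1+f_2$ is the zero map and the sequence is exact, exhibiting $\ore_1\underset{\mathfrak{B}}{\#}\ore_2$ as (up to the connecting map) the homotopy fiber of $-f_1+f_2 : \ore_1\oplus\ore_2 \to \mathfrak{B}$. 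Applying excision to this four-term sequence---or equivalently splicing the two six-term sequences along $HE^*(\mathfrak{B})$ using that $HE^*(\ker f_j)$ computes, via the excision sequence for $0\to\ker f_j\to\ore_j\to\mathfrak{B}\to 0$, the relative term---produces exactly the displayed diagram, with the maps $-f_1^*+f_2^*$ on $HE^*(\mathfrak{B})$ and $g_1^*+g_2^*$ on $HE^*(\ore_1)\oplus HE^*(\ore_2)$, and the connecting homomorphisms of the excision sequence supplying the two unlabelled arrows. The sign conventions and the verification that the composites match are routine diagram chases once the bounded linear sections (and hence the applicability of Proposition~\ref{meyer} at each stage) are secured, so the whole argument reduces to the lifting step above.
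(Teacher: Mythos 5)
Your key step fails as written. You invoke Proposition~\ref{lifting} for the map $f_1:\ore_1\to\mathfrak{B}=\ore_2/\ker f_2$, but that proposition requires the \emph{domain} of the map being lifted to be separable and to have the metric approximation property; here the domain is $\ore_1$, about which the theorem assumes nothing. The hypothesis in the statement is on $\mathfrak{B}$ only, so your lifting step is not covered by it --- and this is precisely why the paper does not argue this way: it builds the cone-type algebra $C$ of pairs $(h_1,h_2)$ with matching boundary data and the surjection $\Psi:C\to S^\infty\mathfrak{B}$, so that the map to be lifted has domain $S^\infty\mathfrak{B}$, which inherits the metric approximation property from $\mathfrak{B}$ by Corollary~\ref{cor}; that is where the hypothesis actually enters. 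Your route is repairable within your own framework: apply Proposition~\ref{lifting} to the identity $\mathrm{id}_{\mathfrak{B}}:\mathfrak{B}\to\ore_2/\ker f_2$ to get a bounded linear section $s_2$ of $f_2$, set $\Phi=s_2\circ f_1$, and then $a_1\mapsto(a_1,\Phi(a_1))$ is a bounded linear section of $g_1$ landing in $\ore_1\underset{\mathfrak{B}}{\#}\ore_2$. But as stated, the step where you yourself say ``the hypotheses genuinely enter'' uses them on the wrong algebra.

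Even granting the section, the rest is not mere bookkeeping. There is no excision for a four-term sequence available in the paper, so the ``homotopy fiber'' presentation must be replaced by an actual splicing of the six-term sequence of $0\to\ker f_2\to\ore_1\underset{\mathfrak{B}}{\#}\ore_2\to\ore_1\to 0$ with that of $0\to\ker f_2\to\ore_2\to\mathfrak{B}\to 0$ (whose section again comes from lifting $\mathrm{id}_{\mathfrak{B}}$). That Barratt--Whitehead-type argument needs naturality of Meyer's boundary maps and, more importantly, an identification of the resulting arrows as exactly $-f_1^*+f_2^*$ and $g_1^*+g_2^*$, signs included. The paper's proof devotes most of its length to precisely this: after reducing $HE^*(C)$ to $HE^{*+1}(\mathfrak{B})$ via Bott periodicity, it identifies the horizontal maps through explicit smooth homotopies (the map $\Theta$, the orientation-reversal fact $\overline{f}{}^*=-f^*$, and the comparison of $\Psi$ with $\Pi$) and the vertical maps through the suspension isomorphisms. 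So your proposal is a genuinely different route (direct excision along $g_1$ plus splicing, versus the paper's cone-over-the-fibered-product mapped onto $S^\infty\mathfrak{B}$), and it could be completed, but both the lifting step and the ``routine diagram chase'' conceal the two places where real work is required.
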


\begin{proof}
We write
\[ C=\{ (h_1, h_2)\in C^\infty \ore_1\oplus C^\infty \ore_2 \, | \, f_1\circ (h_1)_+^{(n)}(0)=(-1)^nf_2\circ (h_2)_+^{(n)}(0) \, (n\in\Z_{\geq 0}) \} \]
and define a map $q : C \to \ore_1 \underset{\mathfrak{B}}{\#} \ore_2$ by
\[ q(h_1, h_2)=(h_1(0), h_2(0)). \]
It is easily verified that the following sequence:
\[
\begin{CD}
0 @>>> \mathfrak{I} @>i>> C @>q>> \ore_1 \underset{\mathfrak{B}}{\#} \ore_2 @>>> 0
\end{CD}
\]
is exact, where
\[ \mathfrak{I}=\{ (h_1, h_2)\in C \, | \, h_j(0)=0 \,\, (j=1, 2) \} \]
and $i$ is the canonical inclusion. Then there exists a bounded linear section $s$ of $q$ defined by
\[ s(a_1, a_2)=((1-t)a_1, (1-t)a_2). \quad ((a_1, a_2)\in \ore_1 \underset{\mathfrak{B}}{\#} \ore_2, \, t\in [0, 1]) \]
Then by Proposition \ref{meyer}, we have the following exact diagram:
\[
\begin{CD}
HE^{{\rm{ev}}}(\ore_1 \underset{\mathfrak{B}}{\#} \ore_2) @>>> HE^{{\rm{ev}}}(C) @>>> HE^{{\rm{ev}}}(\mathfrak{I}) \\
@AAA & &  @VVV \\
HE^{{\rm{od}}}(\mathfrak{I}) @<<< HE^{{\rm{od}}}(C) @<<< HE^{{\rm{od}}}(\ore_1 \underset{\mathfrak{B}}{\#} \ore_2).
\end{CD}
\]
Moreover, repeating the argument cited above, $\mathfrak{I}$ is smoothly homotopic to \\
$S^\infty\ore_1\oplus S^\infty\ore_2$. More precisely, we define the map
\[ r : \mathfrak{I} \to S^\infty\ore_1\oplus S^\infty\ore_2 \]
by
\[ r(h_1, h_2)(t)=(h_1(e^{1-1/t}), h_2(e^{1-1/t})) \quad ((h_1, h_2)\in C) \]
and let $i : S^\infty\ore_1\oplus S^\infty\ore_2 \to \mathfrak{I}$ be the natural inclusion. It follows by the same argument discussed above that since the functions $t\mapsto h_j(e^{1-1/t})$ are in $S^\infty \ore_j \, (j=1, 2)$ and using the maps
\begin{align*}
G_1 &: \mathfrak{I} \to C^\infty ([0, 1], \mathfrak{I}) \\
G_2 &: S^\infty\ore_1\oplus S^\infty\ore_2 \to C^\infty ([0, 1], S^\infty\ore_1\oplus S^\infty\ore_2 )
\end{align*}
defined by
\[ (G_j)_s(h_1, h_2)(t)=(h_1(se^{1-1/t}+(1-s)t), h_2(se^{1-1/t}+(1-s)t)) \quad (j=1, 2), \]
$\mathfrak{I}$ is smoothly homotopic to $S^\infty\ore_1\oplus S^\infty\ore_2$. Hence we conclude that
\[ HE^*(\mathfrak{I})\simeq HE^*(S^\infty\ore_1)\oplus HE^*(S^\infty\ore_2). \]
Now we define the map $\Psi : C \to S^\infty\mathfrak{B}$ by
\[ \Psi (h_1, h_2)(t)=
\begin{cases}
f_1\circ h_1(1-2t) & (t\in [0, 1/2]) \\
f_2\circ h_2(2t-1) & (t\in [1/2, 1]).
\end{cases}
\]
We have to verify that it is well-defined. Since $f_1\circ h_1(0)=f_2\circ h_2(0)$ by the definition of $C$, it is continuous at $t=1/2$. For $n=1$, we compute that
\begin{align*}
\lim_{t\to 1/2+0}\dfrac{\Psi(h_1, h_2)(t)-\Psi(h_1, h_2)(1/2)}{t-1/2}
&=\lim_{t\to 1/2+0}\dfrac{f_2\circ h_2(2t-1)-f_2\circ h_2(0)}{t-1/2} \\
&=f_2 \left( \lim_{t\to 1/2+0}\dfrac{h_2(2t-1)-h_2(0)}{t-1/2} \right) \\
&=2f_2 \left( \lim_{\e \to 0+}\dfrac{h_2(\e )-h_2(0)}{\e}\right) \\
&=2f_2\circ (h_2)_+^{(1)}(0)
\end{align*}
and similarly, we compute that
\begin{align*}
\lim_{t\to 1/2-0}\dfrac{\Psi(h_1, h_2)(t)-\Psi(h_1, h_2)(1/2)}{t-1/2}
&=\lim_{t\to 1/2-0}\dfrac{f_1\circ h_1(1-2t)-f_1\circ h_1(0)}{t-1/2} \\
&=-2f_1\left( \lim_{\e \to 0+}\dfrac{h_1(\e )-h_1(0)}{\e}\right) \\
&=-2f_1\circ (h_1)_+^{(1)}(0)=2f_2\circ (h_2)_+^{(1)}(0).
\end{align*}
Thus $\Psi(h_1, h_2)$ is differentiable once at $t=1/2$. Suppose that it is differentiable $n$-times at $t=1/2$. Here we note that
\[
\Psi^{(n)}(h_1, h_2)(t)=
\begin{cases}
(-2)^nf_1\circ h_1^{(n)}(1-2t) & t\in (0, 1/2) \\
2^nf_2\circ h_2^{(n)}(2t-1) & t\in (1/2, 1)
\end{cases}
\]
and that
\[ \Psi^{(n)}(h_1, h_2)(1/2)=(-2)^nf_1\circ (h_1)_+^{(n)}(0)=2^nf_2\circ (h_2)_+^{(n)}(0) \]
by our hypothesis of induction. Then we compute that
\begin{align*}
&\lim_{t\to 1/2+0}\dfrac{\Psi(h_1, h_2)^{(n)}(t)-\Psi(h_1, h_2)^{(n)}(1/2)}{t-1/2} \\
&=2^n\lim_{t\to 1/2+0}\dfrac{f_2\circ h_2^{(n)}(2t-1)-f_2\circ (h_2)_+^{(n)}(0)}{t-1/2} \\
&=2^nf_2\left(\lim_{t\to 1/2+0}\dfrac{h_2^{(n)}(2t-1)-(h_2)_+^{(n)}(0)}{t-1/2}\right) \\
&=2^n\cdot 2f_2\left(\lim_{\e\to 0+}\dfrac{h_2^{(n)}(\e )-(h_2)_+^{(n)}(0)}{\e}\right) \\
&=2^{n+1}f_2\circ (h_2)_+^{(n+1)}(0).
\end{align*}
Alternatively, we compute that
\begin{align*}
&\lim_{t\to 1/2-0}\dfrac{\Psi(h_1, h_2)^{(n)}(t)-\Psi(h_1, h_2)^{(n)}(1/2)}{t-1/2} \\
&=(-2)^n\lim_{t\to 1/2-0}\dfrac{f_1\circ h_1^{(n)}(1-2t)-f_1\circ (h_1)_+^{(n)}(0)}{t-1/2} \\
&=(-2)^n\cdot (-2)f_1\left(\lim_{\e\to 0+}\dfrac{h_1^{(n)}(\e )-(h_1)_+^{(n)}(0)}{\e}\right) \\
&=(-2)^{n+1}f_1\circ (h_1)_+^{(n+1)}(0)=2^{n+1}f_2\circ (h_2)_+^{(n+1)}(0).
\end{align*}
Therefore, $\Psi(h_1, h_2)$ is differentiable $(n+1)$-times for each $(h_1, h_2)\in C$, which ends the process of induction so that $\Psi$ is well-defined. Since $f_1$ and $f_2$ are surjective, it is easily verified that $\Psi$ is surjective. In fact, we canonically can lift them on $S^\infty\ore_j$, which are denoted by $f_j\, (j=1, 2)$. Now given a $h\in S^\infty\mathfrak{B}$, we find $\widetilde{h_j}\in S^\infty\ore_j$ with 
\[
f_j(\widetilde{h_j}(t))=h(t) \quad (j=1, 2).
\]
Putting
$
h_1(t)=\widetilde{h_1}((1-t)/2), \, h_2(t)=\widetilde{h_2}((1+t)/2) \, (0\leq t \leq 1),
$
we then check that
\begin{align*}
\Psi(h_1, h_2)(t)&=
\begin{cases}
f_1(h_1(1-2t)) & (0\leq t\leq 1/2) \\
f_2(h_2(2t-1)) & (1/2\leq t\leq 1)
\end{cases}
\\
&=
\begin{cases}
f_1\left(\widetilde{h_1}\left(1-2\dfrac{1-t}{2}\right)\right) & (0\leq t\leq 1/2) \\
f_2\left(\widetilde{h_2}\left(2\dfrac{1+t}{2}-1\right)\right) & (1/2\leq t\leq 1)
\end{cases}
\\
&=
\begin{cases}
f_1(\widetilde{h_1}(t)) & (0\leq t\leq 1/2) \\
f_2(\widetilde{h_2}(t)) & (1/2\leq t\leq 1)
\end{cases}
\\
&=h(t),
\end{align*}
which implies that $\Psi$ is surjective. As it is clear that its kernel is $C^\infty \ide_1\oplus C^\infty \ide_2$, where
\[ \ide_j={\rm{Ker}} f_j \quad (j=1, 2), \]
we obtain the following short exact sequence:
\begin{eqnarray}\label{sequence:1}
\begin{CD}
0 @>>> C^\infty\ide_1\oplus C^\infty\ide_2 @>>> C @>\Psi>> S^\infty\mathfrak{B} @>>> 0.
\end{CD}
\end{eqnarray}
Since $\mathfrak{B}$ has the metric approximation property, so does $S^\infty\mathfrak{B}$ by Corollary \ref{cor}. Writing $\mathfrak{J}=C^\infty\ide_1\oplus C^\infty\ide_2$, the inverse map 
\[ \overline{\Psi}^{-1} : S^\infty\mathfrak{B} \to C/\mathfrak{J} \]
of the isomorphism $\overline{\Psi}$ induced by $\Psi$ has a bounded lift
\[
\widetilde{\overline{\Psi}}^{-1} : S^\infty\mathfrak{B}\to C
\]
satisfying $\widetilde{\overline{\Psi}}^{-1}\circ q=\overline{\Psi}^{-1}$ by Proposition \ref{lifting} since $\overline{\Psi}$ preserves each seminorms, where $q$ is the quotient map from $C$ onto $C/\mathfrak{J}$. Hence it is verified that $\widetilde{\overline{\Psi}}^{-1}$ is a bounded linear section of $\Psi$ since we compute that
\[
\Psi\circ\widetilde{\overline{\Psi}}^{-1}=\overline{\Psi}\circ q\circ\widetilde{\overline{\Psi}}^{-1}=\overline{\Psi}\circ\overline{\Psi}^{-1}=id_{S^\infty\mathfrak{B}}.
\]
Therefore, we apply the above exact sequence (\ref{sequence:1}) to Proposition \ref{meyer} to obtain the following exact diagram:
\[
\begin{CD}
HE^{{\rm{ev}}}(S^\infty\mathfrak{B}) @>>> HE^{{\rm{ev}}}(C) @>>> HE^{{\rm{ev}}}(C^\infty\ide_1\oplus C^\infty\ide_2) \\
@AAA & &  @VVV \\
HE^{{\rm{od}}}(C^\infty\ide_1\oplus C^\infty\ide_2) @<<< HE^{{\rm{od}}}(C) @<<< HE^{{\rm{od}}}(S^\infty\mathfrak{B}).
\end{CD}
\]
Since $HE^*(C^\infty\ide_1\oplus C^\infty\ide_2)=0$, we have that
\begin{align*}
HE^{{\rm{ev}}}(C)&\simeq HE^{{\rm{ev}}}(S^\infty\mathfrak{B}) \simeq HE^{{\rm{od}}}(\mathfrak{B}) \\
HE^{{\rm{od}}}(C)&\simeq HE^{{\rm{od}}}(S^\infty\mathfrak{B}) \simeq HE^{{\rm{ev}}}(\mathfrak{B})
\end{align*}
by the Bott periodicity (Lemma \ref{bott}).

Summing up, we get the desired exact diagram in what follows:
\begin{align}
\begin{CD}
HE^{{\rm{ev}}}(\ore_1 \underset{\mathfrak{B}}{\#} \ore_2) @>>> HE^{{\rm{od}}}(\mathfrak{B}) @>>> HE^{{\rm{od}}}(\ore_1)\oplus HE^{{\rm{od}}}(\ore_2) \\
@AAA & &  @VVV \\
HE^{{\rm{ev}}}(\ore_1)\oplus HE^{{\rm{ev}}}(\ore_2) @<<< HE^{{\rm{ev}}}(\mathfrak{B}) @<<< HE^{{\rm{od}}}(\ore_1 \underset{\mathfrak{B}}{\#} \ore_2).
\end{CD}\label{diagram}
\end{align}
We consider the restriction $\Phi : S^\infty\ore_1\oplus S^\infty\ore_2 \to S^\infty\mathfrak{B}$ of $\Psi$. We see that it is $C^\infty$-homotopic to $\Pi : S^\infty\ore_1\oplus S^\infty\ore_2\to\mathfrak{B}$ defined by
\[
\Pi(h_1, h_2)(t)=-\chi_{[0, 1/2]}(t)(f_1\circ h_1)(t)+\chi_{[1/2, 1]}(t)(f_2\circ h_2)(t)
\]
for $(h_1, h_2)\in S^\infty\ore_1\oplus S^\infty\ore_2, t\in [0, 1]$. To see this, we note that for a Fr\'echet continuous homomorphism $f : \ore_1\underset{\mathfrak{B}}{\#}\ore_2\to S^\infty\mathfrak{B}$, we have
\[
\overline{f}^*=-f^* : HE^*(S^\infty\mathfrak{B})\to HE^*(\ore_1\underset{\mathfrak{B}}{\#}\ore_2)
\]
by \cite{meyer}, where $\overline{f} : \ore_1\underset{\mathfrak{B}}{\#}\ore_2\to S^\infty\mathfrak{B}$ is the homomorphism defined by
\[
\overline{f}(a)(t)=f(a)(1-t) \quad (a\in\ore, t\in [0, 1]).
\]
Indeed, we prepare the map $\Theta : S^\infty\ore_1\oplus S^\infty\ore_2 \to C^\infty([0, 1], S^\infty\mathfrak{B})$ defined by
\[
\Theta_s(h_1, h_2)(t)=
\begin{cases}
f_1\circ h_1(1-2t/(1+s)) & (0\leq t\leq 1/2) \\
f_2\circ h_2(2t/(1+s)-(1-s)/(1+s)) & (1/2\leq t\leq 1).
\end{cases}
\]
so that it is a smooth homotopy between $\Psi$ and the homomorphism given by
\[
(h_1, h_2)\mapsto \left(t\mapsto \chi_{[0, 1/2]}(t)(f_1\circ h_1)(1-t)+\chi_{[1/2, 1]}(t)(f_2\circ h_2)(t)\right).
\]
Therefore, we have the homotopy equivalence of $\Psi$ and $\Pi$. Considering the following commutative diagram:
\[
\begin{CD}
HE^*(C) @>>> HE^*(S^\infty\ore_1\oplus S^\infty\ore_2) \\
@A\simeq A\Psi^* A @| \\
HE^*(S^\infty\mathfrak{B}) @>>\Gamma^*=\Pi^*> HE^*(S^\infty\ore_1\oplus S^\infty\ore_2 )
\end{CD}
\]
we conclude that the right upper horizonal map and the left lower horizonal map in the diagram (\ref{diagram}) are both $\Pi^*=-f_1^*+f_2^*$. Finally, since the following diagram
\[
\begin{CD}
HE^*(S^\infty\ore_1)\oplus HE^*(S^\infty\ore_2) @>>> HE^*(\ore_1\underset{\mathfrak{B}}{\#}\ore_2) \\
@V\simeq VV  @| \\
HE^*(\ore_1)\oplus HE^*(\ore_2) @>>g_1^*+g_2^*> HE^*(\ore_1\underset{\mathfrak{B}}{\#}\ore_2)
\end{CD}
\]
is commutative, the vertical maps in the diagram (\ref{diagram}) are both $g_1^*+g_2^*$. This completes the proof.
\end{proof}


\section{The Entire Cyclic Cohomology of Noncommutative 3-spheres}

In \cite{bhms}, Heegaard-type quantum 3-spheres with 3-parameters are constructed as $C^*$-algebras. With their construction in mind, we define noncommutative 3-spheres in the framework of $F^*$-algebras as follows; given an irrational number $\theta$ with $0<\theta <1$, let $\tori$ be the smooth noncommutative 2-torus with unitary generators $u_\theta, v_\theta$ subject to $u_\theta v_\theta =e^{2\pi i\theta}v_\theta u_\theta$. There exists an isomorphism $\gamma_\theta : T_{-\theta}^2 \to \tori $ satisfying
\[ \gamma_\theta (u_{-\theta})=v_\theta, \quad \gamma_\theta (v_{-\theta})=u_\theta \]
by their universality. We consider the following two $F^*$-crossed products:
\[ (D^2\times S^1)_\theta = \toep \rtimes_{\alpha_\theta} \Z , \quad (D^2\times S^1)_{-\theta} = \toep \rtimes_{\alpha_{-\theta}}\Z \]
defined before. We define two epimorphisms $f_j \, (j=1, 2)$ such as
\[ f_1 : (D^2\times S^1)_\theta \to \tori, \quad f_2 : (D^2\times S^1)_{-\theta} \to \tori \]
by $f_1=\widetilde{q}_+, \, f_2=\gamma_\theta\circ\widetilde{q}_-$, where $\widetilde{q}_\pm$ are the epimorphisms from $\toep\rtimes_{\alpha_{\pm\theta}}\Z$ onto $\ct\rtimes_{\overline{\alpha}_{\pm\theta}}\Z=T^2_{\pm\theta}$ respectively.

\begin{definition}
Given an irrational number $\theta$, the noncommutative 3-sphere $S^3_\theta$ is defined by the fibered product $(D^2\times S^1)_\theta \underset{\tori}{\#} (D^2\times S^1)_{-\theta}$ of $((D^2\times S^1)_\theta\, (D^2\times S^1)_{-\theta})$ along $(f_1, f_2)$ over $\tori$.
\end{definition}

First of all, we compute the entire cyclic cohomology of $(D^2\times S^1)_\theta$. We note that the isomorphism $\ct\rtimes_{\overline{\alpha}_\theta}\Z\simeq\tori$ holds and that by Lemma 4.3 in \cite{naito}, we have
\[
HE^*(\ct\rtimes_{\overline{\alpha}_\theta}\Z)\simeq HE^*(\tori)=HP^*(\tori),
\]
where $HP^*$ is the functor of periodic cyclic cohomology. According to Connes \cite{connes}, we know the generators of $HP^*(\tori )$ as follows:
\begin{align*}
HP^{{\rm{ev}}}(\tori )&= \C [\tau_\theta ]\oplus \C [\tau'_\theta ], \\
HP^{{\rm{od}}}(\tori )&= \C [\tau_\theta^{(1)}]\oplus \C [\tau_\theta^{(2)}], 
\end{align*}
where $\tau_\theta$ is the unique normalized trace on $\tori$ and
\begin{align*}
\tau'_\theta(a_0, a_1, a_2)&=\tau_\theta(a_0(\delta_\theta^{(1)}(a_1)\delta_\theta^{(2)}(a_2)-\delta_\theta^{(2)}(a_1)\delta_\theta^{(1)}(a_2))) \\
\tau_\theta^{(j)}(a_0, a_1)&=\tau_\theta(a_0\delta_\theta^{(j)}(a_1)) \quad (j=1, 2),
\end{align*}
where $\delta_\theta^{(j)}$ are the derivations on $\tori$ such that
\[ \delta_\theta^{(1)}(u_\theta )=2\pi iu_\theta, \, \delta_\theta^{(1)}(v_\theta )=0, \, \delta_\theta^{(2)}(u_\theta )=0, \, \delta_\theta^{(2)}(v_\theta )=2\pi iv_\theta . \]

\begin{proposition}\label{d2theta}
\[
HE^{{\rm{ev}}}((D^2\times S^1)_\theta)=\C [\tau_\theta'\circ\widetilde{q}], \quad HE^{{\rm{od}}}((D^2\times S^1)_\theta)=\C[\tau_\theta^{(1)}\circ\widetilde{q}].
\]
\end{proposition}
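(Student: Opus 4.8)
The plan is to apply Meyer's excision (Proposition~\ref{meyer}) to the short exact sequence of Corollary~\ref{seq:2},
\[
0\longrightarrow\comp^\infty\hat{\otimes}_\gamma\mathcal{S}(\Z)\longrightarrow(D^2\times S^1)_\theta\xrightarrow{\ \widetilde q\ }\ct\rtimes_{\overline{\alpha}_\theta}\Z\longrightarrow 0,
\]
which carries the bounded linear section $\widetilde s(fU_\theta^n)=T_fU_\theta^n$. Since $\ct\rtimes_{\overline{\alpha}_\theta}\Z\simeq\tori$, the quotient has $HE^{*}=HP^{*}(\tori)$ with the four Connes generators $\tau_\theta,\tau'_\theta,\tau^{(1)}_\theta,\tau^{(2)}_\theta$ recalled above, so the computation splits into two parts: determining the entire cyclic cohomology of the ideal, and analysing the resulting six-term exact sequence, i.e.\ its connecting homomorphisms.

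For the ideal, Corollary~\ref{stableconti} gives $\comp^\infty\hat{\otimes}_\gamma\mathcal{S}(\Z)\simeq\comp^\infty\hat{\otimes}_\gamma\ct$. I would then invoke the stability of entire cyclic cohomology under smooth compacts: by Lemma~\ref{compact}, $\comp^\infty=\varinjlim(M_n(\C),\varphi_n)$, each corner inclusion $\varphi_n$ induces an isomorphism on entire cyclic cohomology by Morita invariance, and passing to the limit yields $HE^{*}(\comp^\infty\hat{\otimes}_\gamma\ct)\simeq HE^{*}(\ct)$. Since $HE^{*}(\ct)=HP^{*}(\ct)$ is the de Rham cohomology of the circle (cf.\ \cite{naito}), this is one-dimensional in each degree, generated in even degree by the integration trace on $\ct$ and in odd degree by the winding cocycle $\psi(f_0,f_1)=\tfrac{1}{2\pi i}\int_T f_0\,df_1$; cupping with the canonical trace of $\comp^\infty$ produces the corresponding generators of $HE^{*}$ of the ideal. (Alternatively one may split $0\to S^\infty\comp^\infty\to C^\infty(T,\comp^\infty)\to\comp^\infty\to 0$ by the constant embedding and combine Proposition~\ref{meyer}, the Bott periodicity of Lemma~\ref{bott}, and $HE^{*}(\comp^\infty)\simeq HE^{*}(\C)$.)

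Substituting into the six-term sequence, both $HP^{{\rm ev}}(\tori)\simeq\C^{2}$ and $HP^{{\rm od}}(\tori)\simeq\C^{2}$ are flanked on either side by a copy of $\C$, and the proposition follows once the two connecting homomorphisms $\delta$ are shown to be injective: then by exactness each $\widetilde q^{*}$ is surjective with one-dimensional kernel, and identifying these kernels (equivalently, the images of $\delta$) pins down the surviving classes as $\tau'_\theta\circ\widetilde q$ and $\tau^{(1)}_\theta\circ\widetilde q$. To compute $\delta$ I would use the section $\widetilde s$: on monomials $a=fU_\theta^m$, $b=gU_\theta^n$ of $\tori$ its curvature is $\widetilde s(ab)-\widetilde s(a)\widetilde s(b)=-(T_fT_{g'}-T_{fg'})U_\theta^{m+n}$ with $g'=\overline{\alpha}_\theta^m(g)\in\ct$, which lies in $\comp^\infty$; pairing the generating cocycles of the ideal against the iterated curvature terms in Meyer's formula for $\delta$ \cite{meyer}, and using the Toeplitz index identity — $\mathrm{tr}(T_fT_g-T_{fg})$ equals, up to a universal constant, the winding pairing $\tfrac{1}{2\pi i}\int_T f\,dg$ — reproduces, up to nonzero scalars, the Connes cocycles on $\tori$ in the two target degrees, which simultaneously gives injectivity of $\delta$ and identifies $\ker\widetilde q^{*}$.

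The main obstacle is this last step: carrying Meyer's connecting homomorphism across the ideal's entire cyclic cohomology — keeping track of the cup-product structure, signs and normalisations — and matching the outcome with the explicit cocycles $\tau_\theta,\tau'_\theta,\tau^{(1)}_\theta,\tau^{(2)}_\theta$ of \cite{connes}. A secondary technical point is the limit argument for the ideal, which needs continuity of entire cyclic cohomology along the inductive system $\comp^\infty=\varinjlim M_n(\C)$.
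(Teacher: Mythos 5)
Your overall skeleton coincides with the paper's: apply Proposition \ref{meyer} to the extension of Corollary \ref{seq:2}, identify $HE^*$ of the ideal with $\C$ in each degree via Corollary \ref{stableconti}, and then argue that both connecting maps in the six-term sequence are injective, so that $HE^*((D^2\times S^1)_\theta)\simeq\C^2/\C$. The two substantive sub-steps, however, are handled differently. For the ideal, the paper does not run a Morita-plus-inductive-limit argument; it simply quotes Mathai--Stevenson's computation $HE^*(\comp^\infty\hat{\otimes}_\gamma\ct)=H^{\rm DR}_*(T;\C)$. The continuity of $HE^*$ along $\varinjlim M_n(\C)$ that you flag is a real issue (entire cyclic cohomology is contravariant and does not commute with inductive limits without an argument), so the citation is the safer route.

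The more significant divergence is at the step you yourself identify as the main obstacle. You propose to compute Meyer's connecting homomorphism $\delta$ explicitly from the curvature of the section $\widetilde{s}$ and a Toeplitz index identity; as written this is a genuine gap, since you never produce the entire cochains realizing $\delta$ nor the estimates making them admissible. The paper sidesteps $\delta$ entirely and instead proves $\ker\widetilde{q}^*\neq 0$ directly in each parity: it exhibits an explicit cochain $\widetilde{\psi}(x,y)=\tau_\theta\circ\widetilde{q}(x\widetilde{\delta_\theta}^{(2)}(y))$ with $B\widetilde{\psi}=\tau_\theta\circ\widetilde{q}$, $b\widetilde{\psi}=0$, so $[\tau_\theta\circ\widetilde{q}]=0$; and, constructing a bounded positive operator $h_\theta$ on $H^2$ with $U_{\theta r}=e^{irh_\theta}$ and $\widetilde{\delta_\theta}^{(1)}=\mathrm{ad}(h_\theta)$, it shows $b\varphi_0=\tau_\theta^{(1)}\circ\widetilde{q}$ for $\varphi_0(a)=-\tau_\theta\circ\widetilde{q}(ah_\theta)$, so $[\tau_\theta^{(1)}\circ\widetilde{q}]=0$. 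Exactness then forces each $\delta$ to be nonzero on its one-dimensional source, hence injective, without ever writing $\delta$ down. Note also that this computation shows the surviving odd class is $[\tau_\theta^{(2)}\circ\widetilde{q}]$, not $[\tau_\theta^{(1)}\circ\widetilde{q}]$ (the latter is precisely the class transgressed by $h_\theta$); the proposition's displayed formula contains a typo relative to the paper's own proof and its later use in the Mayer--Vietoris computation, and your identification of the survivors inherits it.
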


\begin{proof}
We remember the following short exact sequence:
\[
\begin{CD}
0 @>>> \comp^\infty\rtimes_{\alpha_\theta}\Z @>\widetilde{i}>> (D^2\times S^1)_\theta @>\widetilde{q}>> \ct\rtimes_{\overline{\alpha}_\theta}\Z @>>> 0
\end{CD}
\]
appeared in Corollary \ref{seq:2}. Hence we apply the above exact sequence to Proposition $\ref{meyer}$ to obtain the following exact diagram:
\[
\begin{CD}
HE^{{\rm{ev}}}(\ct\rtimes_{\overline{\alpha}_\theta}\Z) @>\widetilde{q}^*>> HE^{{\rm{ev}}}((D^2\times S^1)_\theta) @>\widetilde{i}^*>> HE^{{\rm{ev}}}(\comp^\infty\rtimes_{\alpha_\theta}\Z) \\
@AAA & &  @VVV \\
HE^{{\rm{od}}}(\comp^\infty\rtimes_{\alpha_\theta}\Z) @<<\widetilde{i}^*< HE^{{\rm{od}}}((D^2\times S^1)_\theta) @<<\widetilde{q}^*< HE^{{\rm{od}}}(\ct\rtimes_{\overline{\alpha}_\theta}\Z).
\end{CD}
\]

Alternatively, we have by Corollary \ref{stableconti} and \cite{mathai-stevenson} that
\begin{align*}
HE^*(\comp^\infty\rtimes_{\alpha_\theta}\Z ) &\simeq HE^*(\comp^\infty\hat{\otimes}_\gamma\ct) \\
&=H^{{\rm{DR}}}_*(T ; \C),
\end{align*}
which implies that
\[
HE^{{\rm{ev}}}(\comp^\infty\rtimes_{\alpha_\theta}\Z)\simeq \C, \quad HE^{{\rm{od}}}(\comp^\infty\rtimes_{\alpha_\theta}\Z)\simeq \C .
\]
Therefore, we have the following exact diagram:
\begin{equation}
\begin{CD}\label{vertical}
\C^2 @>\widetilde{q}^*>> HE^{{\rm{ev}}}((D^2\times S^1)_\theta) @>\widetilde{i}^*>> \C\, \\
@AAA & & @VVV \\
\C\, @<<\widetilde{i}^*< HE^{{\rm{od}}}((D^2\times S^1)_\theta) @<<\widetilde{q}^*< \C^2.
\end{CD}
\end{equation}
We note that there exists an element $[(\widetilde{\psi}_{2k+1})]\in HE^{{\rm{od}}}((D^2\times S^1)_\theta)$ with the property that
\[
(\widetilde{\psi}_{2k+1})=(\widetilde{\psi}, 0, 0, \cdots ),
\]
and
\[
B\widetilde{\psi}=\tau_\theta\circ \widetilde{q}, \quad b\widetilde{\psi}=0,
\]
where $b, B=AB_0$ are the operations defined by Connes \cite{connes}. Indeed, we define $\widetilde{\psi}$ by
\[
\widetilde{\psi}(x, y)=\tau_\theta\circ\widetilde{q}(x\widetilde{\delta_\theta}^{(2)}(y)) \quad (x, y\in (D^2\times S^1)_\theta),
\]
where $\widetilde{\delta_\theta}^{(2)}$ is the derivation on $(D^2\times S^1)_\theta$ induced by
\[
\widetilde{\delta_\theta}^{(2)}\left(\sum_{n\in\Z}A_nU_\theta^n\right)
=\sum_{n\in\Z}2\pi i\theta nA_nU_\theta^n
\]
for any $\sum_{n\in\Z}A_nU_\theta^n\in\toep [\Z]$. We note that $\widetilde{\delta_\theta}^{(2)}$ is Fr\'echet continuous since
\begin{align*}
\left\|\widetilde{\delta_\theta}^{(2)}\left(\sum_{n\in\Z}A_nU_\theta^n\right)\right\|_{p, q, r, s}&=\sup_{n\in\Z}(1+n^2)^p\left\|2\pi i\theta nA_n\right\|_{q, r, s} \\
&\leq 2\pi\theta \sup_{n\in\Z}(1+n^2)^{p+1}\left\|A_n\right\|_{q, r, s} \\
&=2\pi\theta\left\|\sum_{n\in\Z}A_nU_\theta^n\right\|_{p+1, q, r, s}.
\end{align*}
for any $p, q, r, s\in\Z_{\geq 0}$. In this case, let $1\in \toep\rtimes_{\alpha_\theta}\Z$ be the unit. It is clear that $\widetilde{\delta_\theta}^{(2)}(1)=0$. Then by the definition of $b$ and $B$, we have that
\begin{align*}
B\widetilde{\psi}(x)&=\widetilde{\psi}(1, x)+\widetilde{\psi}(x, 1) \\
&=\tau_\theta\circ\widetilde{q}(x\widetilde{\delta_\theta}^{(2)}(1))+\tau_\theta\circ\widetilde{q}(1\widetilde{\delta_\theta}^{(2)}(x)) \\
&=\tau_\theta\circ\widetilde{q}(\widetilde{\delta_\theta}^{(2)}(x)) \quad (x\in (D^2\times S^1)_\theta).
\end{align*}
We note that for any $f\in \ct\rtimes_{\overline{\alpha}_\theta}\Z$,
\[
\tau_\theta(f)=\int_T f(0)(t)dt.
\]
Thus, we obtain that
\begin{align*}
\tau_\theta\circ\widetilde{q}(\widetilde{\delta_\theta}(x))&=\int_T \widetilde{q}(\widetilde{\delta_\theta}(x))(0)(t)dt \\
&=\int_T q(x(0))(t)dt=\tau_\theta\circ\widetilde{q}(x)
\end{align*}
for any $x\in (D^2\times S^1)_\theta$, which implies that $\widetilde{q}^*[\tau_\theta]=0$. Hence, $\ker \widetilde{q}^*\neq 0$ so that the left vertical map of (\ref{vertical}) is not $0$, therefore, injective.

Similarly, we show that the right vertical map is also injective. Since $\theta$ is an irrational number, the set $\{e^{2\pi i\theta n}\in\C \, | \, n\in\Z\}$ is dense in $T$. Hence, for all $r\in [0, 1]$, there exists an sequence $\{N_j\}_j\subset\Z$ with $|\{\theta N_j\}-r|\to 0$ as $j\to\infty$, where
\[
\{ x\}=x-\max_{x\geq k, \, k\in\Z}k \quad (x\in\R).
\]
We consider the family $\{U_{\theta N_j}\}$ of unitary operators on $H^2$. Since we see that for any $\xi\in H^2$,
\begin{align*}
\|(U_{\theta N_j}-U_{\theta N_k})\xi\|_{H^2}^2&=\|(U_{\theta}^{N_j}-U_{\theta}^{N_k})\xi\|_{H^2}^2 \\
&=\int_T |\xi(e^{2\pi i\theta N_j}t)-\xi(e^{2\pi i\theta N_k}t)|^2 dt \\
&=\int_T |\xi(e^{2\pi i\theta (N_j-N_k)}t)-\xi(t)|^2 dt \to 0 \quad (j, k\to \infty)
\end{align*}
by the Lebesgue dominated convergence theorem, we obtain that $\{U_{\theta N_j}\}$ has the strong limit $U_r$. It is easily seen that $U_r\xi(t)=\xi(e^{2\pi ir}t) \, (\xi\in H^2, \, t\in T)$. Moreover, we define the operator $h_\theta$ on $H^2$ by
\[
h_\theta\xi(t)=2\pi\sum_{j=0}^\infty \{j\theta\}c_jt^j \quad \left(\xi(t)=\sum_{j=0}^\infty c_jt^j\in H^2\right).
\]
Since $0\leq\{j\theta\}\leq 1$, it is easily verified that $h_\theta$ is a bounded self-adjoint positive operator on $H^2$ and $U_{\theta r}=e^{irh_\theta}$ for $r\in [0, 1]$ by Stone's theorem. Taking again a family $\{N_j\}_{j\in\Z_{\geq 0}}\subset\Z$ with $|e^{2\pi i\theta N_j}-e^{2\pi ir}|\to 0$ as $j\to\infty$, we have that
\begin{align*}
&\|\alpha_{\theta N_j}(x)\xi-\alpha_{\theta N_k}(x)\xi\|_{H^2} \\
&=\|U_{\theta N_j}xU_{-\theta N_j}\xi-U_{\theta N_k}xU_{-\theta N_k}\xi\|_{H^2} \\
&\leq\|U_{\theta N_j}x(U_{-\theta N_j}-U_{-\theta N_k})\xi\|_{H^2}+\|(U_{\theta N_j}-U_{\theta N_k})xU_{-\theta N_k}\xi\|_{H^2} \\
&\to 0 \quad (x\in\toep, \, \xi\in H^2)
\end{align*}
since the operation of product is strongly continuous. Therefore, it follows that $\alpha_r(x)=U_rxU_{-r}$ for $x\in\mathbb{B}(H^2)$. We write
\[
\widetilde{\delta_\theta}^{(1)}(x)=h_\theta x-xh_\theta ={{\rm{ad}}}(h_\theta)(x) \quad (x\in\toep\rtimes_{\alpha_\theta}\Z)
\]
so that
\[
e^{ir\widetilde{\delta_\theta}^{(1)}}=e^{ir{{\rm{ad}}}(h_\theta)}=\alpha_{\theta r} \quad (r\in [0, 1]).
\]
We now extend the homomorphism $\widetilde{q} : \toep\rtimes_{\alpha_\theta}\Z \to \ct\rtimes_{\overline{\alpha}_\theta}\Z$ to that from the strong closure of $\toep\rtimes_{\alpha_\theta}\Z$ onto that of $\ct\rtimes_{\overline{\alpha}_\theta}\Z$ faithfully acting on $L^2(T)$ because of the simplicity of $\tori=\ct\rtimes_{\overline{\alpha}_\theta}\Z$, that is, that from $\mathbb{B}(H^2)$ onto $L^\infty(T)\rtimes_{\overline{\alpha}_\theta}\Z$. We also extend the trace $\tau_\theta$ on $\tori$ to that on $L^\infty(T)\rtimes_{\overline{\alpha}_\theta}\Z$. We use the same letters for their extensions. Then, we have that $\widetilde{q}\circ\widetilde{\delta_\theta}^{(1)}=\delta_\theta^{(1)}\circ\widetilde{q}$ on $\mathbb{B}(H^2)$. Under the above preparation, we define the linear functional $\varphi_0$ on $\toep\rtimes_{\alpha_\theta}\Z$ by
\[
\varphi_0(a)=-\tau_\theta\circ\widetilde{q}(ah_\theta) \quad (a\in \toep\rtimes_{\alpha_\theta}\Z).
\]
Then we compute that
\begin{align*}
(b\varphi_0)(a, b)&=\varphi_0(ab)-\varphi_0(ba) \\
&=-\tau_\theta\circ\widetilde{q}(abh_\theta)+\tau_\theta\circ\widetilde{q}(bah_\theta) \\
&=-\tau_\theta(\widetilde{q}(a)\widetilde{q}(b)\widetilde{q}(h_\theta))
+\tau_\theta(\widetilde{q}(b)\widetilde{q}(a)\widetilde{q}(h_\theta)) \\
&=\tau_\theta(\widetilde{q}(a)\widetilde{q}(h_\theta)\widetilde{q}(b)
-\widetilde{q}(a)\widetilde{q}(b)\widetilde{q}(h_\theta)) \\
&=\tau_\theta(\widetilde{q}(a)\widetilde{q}(h_\theta b-bh_\theta)) \quad (a, b\in\toep\rtimes_{\alpha_\theta}\Z).
\end{align*}
By the definition of $\widetilde{\delta_\theta}^{(1)}$ and $\widetilde{q}\circ\widetilde{\delta_\theta}^{(1)}=\delta_\theta^{(1)}\circ\widetilde{q}$, we have that
\begin{align*}
(b\varphi_0)(a, b)&=\tau_\theta(\widetilde{q}(a)\widetilde{q}\circ\widetilde{\delta_\theta}^{(1)}(b)) \\
&=\tau_\theta(\widetilde{q}(a)\delta_\theta^{(1)}\circ\widetilde{q}(b))
=(\tau_\theta^{(1)}\circ\widetilde{q})(a, b)
\end{align*}
for any $a, b\in\toep\rtimes_{\alpha_\theta}\Z$. Therefore, we obtain that
\[
(b+B)[(\varphi_0, 0, \cdots )]=[(\tau_\theta^{(1)}\circ\widetilde{q}, 0, \cdots )],
\]
which means that $[\tau_\theta^{(1)}\circ\widetilde{q}]=0\in HE^{{\rm{od}}}(\toep\rtimes_{\alpha_\theta}\Z)$. Hence, we have $\ker \widetilde{q}^*\neq 0$ so that the right vertical map of (\ref{vertical}) is also injective.

Summing up, we obtain the following exact diagram:

\begin{equation}\label{diag:1}
\begin{CD}
\C^2 @>\widetilde{q}^*>> HE^{{\rm{ev}}}((D^2\times S^1)_\theta) @>0 >> \C\, \\
@AAA & & @VVV \\
\C\, @<<0 < HE^{{\rm{od}}}((D^2\times S^1)_\theta) @<<\widetilde{q}^*< \C^2.
\end{CD}
\end{equation}
to conclude that
\[
HE^*((D^2\times S^1)_\theta)\simeq \C^2 /\C \simeq \C
\]
as required. Moreover, we easily seen that $\widetilde{q}^*\neq 0$. Hence $\widetilde{q}^*[\tau_\theta']=[\tau_\theta'\circ\widetilde{q}]$ and $\widetilde{q}^*[\tau_\theta^{(2)}]=[\tau_\theta^{(2)}\circ\widetilde{q}]$ are the generators of corresponding entire cyclic cohomology.
\end{proof}

We need the following lemma to end up the main result:

\begin{lemma}\label{trace}
We have the following equalities:
\begin{enumerate}
\item $\tau_\theta\circ\gamma_\theta=\tau_{-\theta}$ and $\tau_\theta' \circ \gamma_\theta = -\tau_{-\theta}'$,
\item $\tau_\theta^{(1)}\circ \gamma_\theta = \tau_{-\theta}^{(2)}$ and $\tau_\theta^{(2)}\circ \gamma_\theta = \tau_{-\theta}^{(1)}$.
\end{enumerate}
\end{lemma}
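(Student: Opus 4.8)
The plan is to isolate one structural fact---that $\gamma_\theta$ intertwines the two canonical derivations of the tori while swapping their indices---and then deduce all four identities mechanically, together with the uniqueness of the normalized trace. First, since $\gamma_\theta$ is an algebra isomorphism, $\tau_\theta\circ\gamma_\theta$ is a continuous trace on $T^2_{-\theta}$: indeed $\tau_\theta(\gamma_\theta(ab))=\tau_\theta(\gamma_\theta(a)\gamma_\theta(b))=\tau_\theta(\gamma_\theta(b)\gamma_\theta(a))=\tau_\theta(\gamma_\theta(ba))$. It is normalized because $\tau_\theta\circ\gamma_\theta(1)=\tau_\theta(1)=1$, and since $-\theta$ is irrational the normalized continuous trace on $T^2_{-\theta}$ is unique, so $\tau_\theta\circ\gamma_\theta=\tau_{-\theta}$; this is the first identity of $(1)$.

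Next I would establish
\[
\delta_\theta^{(1)}\circ\gamma_\theta=\gamma_\theta\circ\delta_{-\theta}^{(2)},\qquad \delta_\theta^{(2)}\circ\gamma_\theta=\gamma_\theta\circ\delta_{-\theta}^{(1)}.
\]
Regarding $\tori$ as a $T^2_{-\theta}$-bimodule through $\gamma_\theta$, both sides of each equation are continuous derivations $T^2_{-\theta}\to\tori$, so by the Leibniz rule and the density of the $^*$-subalgebra generated by $u_{-\theta},v_{-\theta}$ it suffices to check the identities on those two generators. From $\gamma_\theta(u_{-\theta})=v_\theta$, $\gamma_\theta(v_{-\theta})=u_\theta$ and the defining table of the $\delta_\theta^{(j)}$ one gets, for instance,
\[
\delta_\theta^{(1)}(\gamma_\theta(u_{-\theta}))=\delta_\theta^{(1)}(v_\theta)=0=\gamma_\theta(\delta_{-\theta}^{(2)}(u_{-\theta})),\qquad \delta_\theta^{(1)}(\gamma_\theta(v_{-\theta}))=\delta_\theta^{(1)}(u_\theta)=2\pi i u_\theta=\gamma_\theta(\delta_{-\theta}^{(2)}(v_{-\theta})),
\]
and the remaining cases are entirely analogous.

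With these two ingredients the rest is substitution. For $(2)$,
\[
\tau_\theta^{(1)}\circ\gamma_\theta(a_0,a_1)=\tau_\theta\bigl(\gamma_\theta(a_0)\,\delta_\theta^{(1)}(\gamma_\theta(a_1))\bigr)=\tau_\theta\circ\gamma_\theta\bigl(a_0\,\delta_{-\theta}^{(2)}(a_1)\bigr)=\tau_{-\theta}^{(2)}(a_0,a_1),
\]
using the intertwining relation and then $\tau_\theta\circ\gamma_\theta=\tau_{-\theta}$; the identity $\tau_\theta^{(2)}\circ\gamma_\theta=\tau_{-\theta}^{(1)}$ is obtained by interchanging $1$ and $2$. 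For the second identity of $(1)$, inserting the intertwining relations into the formula for $\tau_\theta'$ replaces $\delta_\theta^{(1)}(\gamma_\theta a_1)\delta_\theta^{(2)}(\gamma_\theta a_2)-\delta_\theta^{(2)}(\gamma_\theta a_1)\delta_\theta^{(1)}(\gamma_\theta a_2)$ by $\gamma_\theta\bigl(\delta_{-\theta}^{(2)}(a_1)\delta_{-\theta}^{(1)}(a_2)-\delta_{-\theta}^{(1)}(a_1)\delta_{-\theta}^{(2)}(a_2)\bigr)$, which is minus the corresponding bracket for $-\theta$; applying $\tau_\theta\circ\gamma_\theta=\tau_{-\theta}$ then yields $\tau_\theta'\circ\gamma_\theta=-\tau_{-\theta}'$.

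The only step calling for any care is the derivation-intertwining relation: one must get right which of $\delta^{(1)},\delta^{(2)}$ corresponds to which under the generator-swapping $\gamma_\theta$, and observe that since both sides are derivations into a bimodule it is enough to test them on the unitary generators $u_{-\theta},v_{-\theta}$, after which continuity and the Leibniz rule do the rest. Everything else is bookkeeping plus the uniqueness of the normalized trace.
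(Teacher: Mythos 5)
Your proposal is correct and follows essentially the same route as the paper: uniqueness of the normalized trace for the first identity of (1), then the generator-swapping intertwining relations for the derivations, then substitution into the defining formulas for $\tau_\theta'$ and $\tau_\theta^{(j)}$. If anything, your statement of the intertwining relation as $\delta_\theta^{(1)}\circ\gamma_\theta=\gamma_\theta\circ\delta_{-\theta}^{(2)}$ (with $\gamma_\theta$ on the right-hand side) and your justification via the bimodule-derivation/density argument are slightly more careful than the paper's, which writes the relation without the second $\gamma_\theta$ and only checks generators.
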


\begin{proof}
Since $\tau_\theta\circ\gamma_\theta$ is a normalized trace on $T^2_{-\theta}$, it follows by uniqueness that $\tau_\theta\circ\gamma_\theta = \tau_{-\theta}$. We firstly verify that
\[ \delta_\theta^{(1)}\circ \gamma_\theta =\delta_{-\theta}^{(2)}, \quad \delta_\theta^{(2)}\circ \gamma_\theta =\delta_{-\theta}^{(1)}. \]
In fact, it is sufficient to verify these equalities for generators. We compute that
\begin{align*}
\delta_\theta^{(j)}\circ\gamma_\theta(u_{-\theta})&=\delta_\theta^{(j)}(v_\theta)=
\begin{cases}
0 & (j=1) \\
2\pi iv_\theta & (j=2)
\end{cases}
\\
\delta_\theta^{(j)}\circ\gamma_\theta(v_{-\theta})&=\delta_\theta^{(j)}(u_\theta)=
\begin{cases}
2\pi iu_\theta & (j=1) \\
0 & (j=2).
\end{cases}
\end{align*}
We then deduce that
\begin{align*}
&\tau_\theta'(\gamma_\theta(b_0), \gamma_\theta(b_1), \gamma_\theta(b_2)) \\
&=\tau_\theta(\gamma_\theta(b_0)((\delta_\theta^{(1)}\circ\gamma_\theta(b_1))(\delta_\theta^{(2)}\circ\gamma_\theta(b_2))-(\delta^{(2)}\circ\gamma_\theta(b_1))(\delta_\theta^{(1)}\gamma_\theta(b_2)))) \\
&=\tau_\theta(\gamma_\theta(b_0(\delta_{-\theta}^{(2)}(b_1)\delta_{-\theta}^{(1)}(b_2)-\delta_{-\theta}^{(1)}(b_1)\delta_{-\theta}^{(2)}(b_2))) \\
&=-\tau_\theta\circ\gamma_\theta(b_0(\delta_{-\theta}^{(1)}(b_1)\delta_{-\theta}^{(2)}(b_2)-\delta_{-\theta}^{(2)}(b_1)\delta_{-\theta}^{(1)}(b_2))) \\
&=-\tau_{-\theta}(b_0(\delta_{-\theta}^{(1)}(b_1)\delta_{-\theta}^{(2)}(b_2)-\delta_{-\theta}^{(2)}(b_1)\delta_{-\theta}^{(1)}(b_2))) \quad (b_0, b_1, b_2\in T^2_{-\theta}).
\end{align*}
Moreover, for $b_0, b_1\in T^2_{-\theta}$, we calculate that
\begin{align*}
\tau_\theta^{(1)}\circ\gamma_\theta(b_0, b_1)
&=\tau_\theta^{(1)}(\gamma_\theta(b_0)\delta_\theta^{(1)}(\gamma_\theta(b_1))) \\
&=\tau_\theta^{(1)}(\gamma_\theta(b_0\delta_{-\theta}^{(2)}(b_1))) \\
&=\tau_{-\theta}^{(2)}(b_0, b_1).
\end{align*} 
Similarly we have that $\tau_\theta^{(2)}\circ\gamma_\theta=\tau_{-\theta}^{(1)}$.
\end{proof}

Under the above preparation, we determine the entire cyclic cohomology of noncommutative 3-spheres $S^3_\theta$. By Theorem \ref{m-v-seq}, we have the following exact diagram:
\[
\begin{CD}
HE^{{\rm{ev}}}(S^3_\theta) @>>> HE^{{\rm{od}}}(\tori) @>-f_1^*+f_2^*>> G_\theta^1\oplus G_{-\theta}^1 \\
@Ag_1^*+g_2^*AA & &  @VVg_1^*+g_2^*V \\
G_\theta^0\oplus G_{-\theta}^0 @<<-f_1^*+f_2^*< HE^{{\rm{ev}}}(\tori) @<<< HE^{{\rm{od}}}(S^3_\theta),
\end{CD}
\]
where $G_{\pm\theta}^0=HE^{{\rm{ev}}}((D^2\times S^1)_{\pm\theta}), G_{\pm\theta}^1=HE^{{\rm{od}}}((D^2\times S^1)_{\pm\theta})$ respectively. By Proposition \ref{d2theta} and the description in its proof, the above diagram becomes the following one:
\[
\begin{CD}
HE^{{\rm{ev}}}(S^3_\theta) @>>> \C^2 @>-f_1^*+f_2^*>> \C^2 \\
@Ag_1^*+g_2^*AA & &  @VVg_1^*+g_2^*V \\
\C^2 @<<-f_1^*+f_2^*< \C^2 @<<< HE^{{\rm{od}}}(S^3_\theta).
\end{CD}
\]
We describe precisely the maps $-f_1^*+f_2^*$ to compute $HE^*(S^3_\theta)$. For the even case, we check the map
\[
-f_1^*+f_2^* : HP^{{\rm{ev}}}(\tori)=\C [\tau_\theta]\oplus\C[\tau_\theta']\to \C [\tau_\theta'\circ\widetilde{q}]\oplus\C [\tau_{-\theta}'\circ\widetilde{q}]=G^0_\theta\oplus G^0_{-\theta}.
\]
We have $f_1^*[\tau_\theta]=[\tau_\theta\circ\widetilde{q}]=0$ by the calculation in Proposition \ref{d2theta} and $f_1^*[\tau_\theta']=[\tau_\theta'\circ\widetilde{q}]$. Alternatively, it follows from Lemma \ref{trace} that $f_2^*[\tau_\theta]=[\tau_{-\theta}\circ\widetilde{q}]=0$ by the same reason for the case of $f_1^*$ and that $f_2^*[\tau_\theta']=[\tau_\theta'\circ\widetilde{q}]=-[\tau_{-\theta}'\circ\widetilde{q}]$ by Lemma \ref{trace}. On the other hand, for the odd case, we consider the map
\[
-f_1^*+f_2^* : HP^{{\rm{od}}}(\tori)=\C [\tau_\theta^{(1)}]\oplus\C [\tau_\theta^{(2)}]\to\C[\tau_\theta^{(2)}\circ\widetilde{q}]\oplus\C [\tau_{-\theta}^{(2)}\circ\widetilde{q}]=G_\theta^1\oplus G_{-\theta}^1.
\]
Similarly we compute that
\begin{align*}
f_1^*[\tau_\theta^{(2)}]&=[\tau_\theta^{(2)}\circ\widetilde{q}] \\
f_1^*[\tau_\theta^{(1)}]&=[\tau_\theta^{(1)}\circ\widetilde{q}]=0 \\
f_2^*[\tau_\theta^{(1)}]&=[\tau_\theta^{(1)}\circ\gamma_\theta\circ\widetilde{q}]=[\tau_{-\theta}^{(2)}\circ\widetilde{q}] \\
\intertext{and}
f_2^*[\tau_\theta^{(2)}]&=[\tau_\theta^{(2)}\circ\gamma_\theta\circ\widetilde{q}]=[\tau_{-\theta}^{(1)}\circ\widetilde{q}]=0
\end{align*}
by Lemma \ref{trace}.

Therefore, we have the following exact diagram:
\[
\begin{CD}
HE^{{\rm{ev}}}(S^3_\theta) @> 0 >> \C^2 @>(\lambda, \mu)\mapsto (-\mu, \lambda) >> \C^2 \\
@AAA & &  @VV 0 V \\
\C^2 @<<(\lambda, \mu)\mapsto (-\mu, -\mu)< \C^2 @<<< HE^{{\rm{od}}}(S^3_\theta),
\end{CD}
\]
by which we conclude that
\begin{align*}
HE^{{\rm{ev}}}(S^3_\theta)&\simeq \coker\{\C\oplus\C\ni (\lambda, \mu)\mapsto (-\mu, -\mu)\in\C\oplus\C\}\simeq \C, \\
HE^{{\rm{od}}}(S^3_\theta)&\simeq \ker\{\C\oplus\C\ni (\lambda, \mu)\mapsto (-\mu, -\mu)\in\C\oplus\C\}\simeq \C.
\end{align*}
This completes our computation of the entire cyclic cohomology of noncommutative 3-spheres.
\begin{theorem}
The entire cyclic cohomology of noncommutative {\rm{3}}-spheres is isomorphic to the d'Rham homology of the ordinary {\rm{3}}-spheres with complex coefficients.
\end{theorem}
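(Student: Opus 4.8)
The plan is to read the statement off from the computation just completed, so the work is entirely one of comparison. First I would record what the preceding argument gives: as a $\Z/2$-graded complex vector space,
\[
HE^{{\rm ev}}(S^3_\theta)\simeq\C,\qquad HE^{{\rm od}}(S^3_\theta)\simeq\C .
\]
Next I would write down the $\Z/2$-graded de Rham homology of the classical $3$-sphere. Since $S^3$ is a closed, orientable, connected $3$-manifold with vanishing first Betti number, one has $H^{{\rm DR}}_0(S^3;\C)\simeq H^{{\rm DR}}_3(S^3;\C)\simeq\C$ and $H^{{\rm DR}}_1(S^3;\C)=H^{{\rm DR}}_2(S^3;\C)=0$ (e.g.\ by Poincar\'e duality). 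Grouping by parity, the even part $H^{{\rm DR}}_0(S^3;\C)\oplus H^{{\rm DR}}_2(S^3;\C)$ and the odd part $H^{{\rm DR}}_1(S^3;\C)\oplus H^{{\rm DR}}_3(S^3;\C)$ are each one-dimensional over $\C$. Matching this against the display above yields an isomorphism in each parity, which is precisely the assertion.

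I would then spell out the conceptual content of the equality. Combining the isomorphism with Connes' identification $HP^*(C^\infty(S^3))\simeq H^{{\rm DR}}_*(S^3;\C)$, the theorem says that the entire cyclic cohomology of $S^3_\theta$ coincides with its periodic cyclic cohomology, and in turn with that of the commutative $3$-sphere --- the same phenomenon already observed for smooth noncommutative $2$-tori in \cite{naito}. The substantive inputs feeding the computation, all of which are already available, are: Meyer's excision and homotopy invariance for $HE$ over Fr\'echet $^*$-algebras, the resulting Bott periodicity (Lemma~\ref{bott}) and the Mayer--Vietoris sequence (Theorem~\ref{m-v-seq}); the smooth Heegaard presentation $S^3_\theta=(D^2\times S^1)_\theta\underset{\tori}{\#}(D^2\times S^1)_{-\theta}$; Proposition~\ref{d2theta}, which identifies $HE^*((D^2\times S^1)_{\pm\theta})$ together with explicit generators; and Lemma~\ref{trace}, which tracks the traces transported through the isomorphism $\gamma_\theta$ and thereby pins down the maps $-f_1^*+f_2^*$ in the Mayer--Vietoris diagram.

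There is no genuine obstacle remaining at this last step: everything reduces to the bookkeeping that ranks and parities agree, as the computation of $\coker$ and $\ker$ of the map $(\lambda,\mu)\mapsto(-\mu,-\mu)$ shows. The only point that deserves mild care is to carry out the comparison as $\Z/2$-graded vector spaces rather than merely matching total dimensions; the parity bookkeeping in the final diagram already does this. If one wanted a fully self-contained treatment, the single external fact to invoke explicitly is the Connes isomorphism $HP^*(C^\infty(M))\simeq H^{{\rm DR}}_*(M;\C)$ for a smooth closed manifold $M$; for the statement as phrased, however, the dimension count above suffices.
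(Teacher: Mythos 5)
Your proposal is correct and follows exactly the paper's route: the theorem is simply the parity-graded comparison of the computation $HE^{\rm ev}(S^3_\theta)\simeq HE^{\rm od}(S^3_\theta)\simeq\C$ (obtained from the Mayer--Vietoris diagram and the cokernel/kernel of $(\lambda,\mu)\mapsto(-\mu,-\mu)$) with $H^{\rm DR}_{\rm ev}(S^3;\C)\simeq H^{\rm DR}_{\rm od}(S^3;\C)\simeq\C$. Your remark about performing the comparison as $\Z/2$-graded spaces rather than by total dimension is the right precaution and is consistent with what the paper does.
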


\end{document}